\newcommand\ignore[1]{}
\newcommand{\cmark}{\ding{51}}
\newcommand{\xmark}{\ding{55}}
\newcommand\CC{{\mathbb{C}}}
\newcommand\RR{{\mathbb{R}}}
\newcommand\ZZ{{\mathbb{Z}}}
\def\C{{\mathbb C}}
\def\P{{\mathbb P}}
\def\operatorname#1{\mathop{\rm #1}\nolimits}
\def\deg{\operatorname{deg}}
\def\det{\operatorname{det}}
\newcommand{\pb}{\ar@{}[dr]|{\text{\pigpenfont J}}}
\newcommand{\xleftrightarrow}[2][]{\ext@arrow 3359\leftrightarrowfill@{#1}{#2}}
\newcommand{\xdasharrow}[2][->]{
\tikz[baseline=-\the\dimexpr\fontdimen22\textfont2\relax]{
\node[anchor=south,font=\scriptsize, inner ysep=1.5pt,outer xsep=2.2pt](x){#2};
\draw[shorten <=3.4pt,shorten >=3.4pt,dashed,#1](x.south west)--(x.south east);
}}
\numberwithin{equation}{section}
\theoremstyle{plain}
\newtheorem{thm}{Theorem}[section]
\newtheorem{cor}[thm]{Corollary}
\newtheorem{lemma}[thm]{Lemma}
\newtheorem{prop}[thm]{Proposition}
\theoremstyle{remark}
\newtheorem{rem}[thm]{Remark}
\newtheorem{ex}[thm]{Example}
\theoremstyle{definition}
\newtheorem{defi}[thm]{Definition}
\DeclareMathOperator\initial{in_{<}}
\begin{document}
\title{Sullivant-Talaska ideal of the cyclic Gaussian Graphical Model}

%please add authors and all the data depending on the type of document
\author[Conner]{Austin Conner}
\address{University of Konstanz, Germany, Fachbereich Mathematik und Statistik, Fach D 197
D-78457 Konstanz, Germany}
\curraddr{Department of Mathematics, Harvard University, 1 Oxford St, Cambridge, MA, 02138}
\email{aconner@math.harvard.edu}

\author[Han]{Kangjin Han}
\address{ 
School of Undergraduate Studies, Daegu-Gyeongbuk Institute of Science \& Technology (DGIST),
Daegu 42988, Republic of Korea}
\email{kjhan@dgist.ac.kr}

\author[Micha{\l}ek]{Mateusz Micha{\l}ek}
\address{
University of Konstanz, Germany, Fachbereich Mathematik und Statistik, Fach D 197
D-78457 Konstanz, Germany
}
\email{mateusz.michalek@uni-konstanz.de}

\thanks{A.C.~supported by NSF grant 2002149 and DFG grant 467575307, K.H.~supported by a National Research Foundation of Korea (NRF) grant (MSIT no. 2021R1F1A104818611) and DGIST Global Visiting Research Program, M.M.~supported by the DFG grant 467575307.}

\begin{abstract}
In this paper, we settle a conjecture due to Sturmfels and Uhler concerning generation of the prime ideal of the variety associated to the Gaussian graphical model of any cycle graph. Our methods are general and applicable to a large class of ideals with radical initial ideals. 
\end{abstract}

\keywords{}
\subjclass[2020]{primary: 62R01, 13P10, 13P25\\ secondary: 05A10, 05C30, 14M12, 14M20, 14N10}

 %62R01, 14M17, 14N10, 14E05 secondary: 14C17, 14M15, 14Q15, 60G15

\maketitle
%\today
%\par\bigskip\hrule\par\bigskip
% contents input of sectio ns
%Make the following sections and divide the structure of the input, make respective \TeX files :
%\tableofcontents
\section{Introduction}
The main inspiration for our work is the groundbreaking article by Sturmfels and Uhler \cite{StUh}. It sets-up foundations for algebraic study of linear, and in particular graphical, Gaussian models. In short, to a linear subspace $L$ of symmetric $n\times n$ matrices one associates an algebraic variety $L^{-1}$, which is the Zariski closure of inverses of (invertible) matrices in $L$. 

To stress how important this work is, let us note that when $L$ is a space of diagonal matrices, there is a representable matroid associated to it, or in fact two closely related matroids \cite[\S3.3]{Huh}, \cite[\S4.1]{DMS2}. Many invariants of $L^{-1}$, like the degree, are related to basic invariants of the matroid, like beta invariant or constant coefficient of the characteristic polynomial \cite[\S3]{StUh}. This was later fundamental in the work of June Huh, who made further connections relating the multidegree of the graph of the inversion map $L\rightarrow L^{-1}$ with the coefficients of the reduced characteristic polynomial, confirming the long-standing open conjecture of Read \cite{Huh}.

Still, from the point of view of statistics, the most interesting spaces $L$ are not contained in the diagonal matrices, but rather contain the whole space of diagonal matrices. For such general $L$, inspired by the results of Huh, the formulas for the degree of $L^{-1}$ were obtained in \cite{MMMSV}, basing on results from \cite{MMW}.
However, special $L$ corresponding to Gaussian graphical models play a central role in algebraic statistics, while still many results about them are mostly conjectural.
 
Given a graph $G$ on $n$ vertices, one defines a linear space $L_G$ of symmetric matrices --- details are given in \S\ref{sec:graphical}. Our main result is the following theorem, confirming \cite[Conjectured equation $(23)$]{StUh}.
\begin{thm}\label{thm:main}
Let $C_n$ be the cycle graph on $n$ vertices. The special $3\times 3$ minors generating its Sullivant-Talaska ideal generate also $I(C_n)$, the prime ideal of the cyclic Gaussian graphical model. Hence, the two ideals are equal.
\end{thm}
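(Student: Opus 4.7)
The containment $J \subseteq I(C_n)$, where $J$ denotes the ideal generated by the special $3\times 3$ Sullivant--Talaska minors, is immediate: each such minor vanishes on the parametrization of the Gaussian graphical model, hence lies in the prime $I(C_n)$. All the content of the theorem is in the reverse inclusion, and the plan is to obtain it via a Gröbner degeneration of $J$ to a squarefree monomial ideal.

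The first step is to choose a term order $<$ on the polynomial ring in the entries of the concentration matrix, adapted to the cyclic symmetry of $C_n$, so that the initial term of each Sullivant--Talaska generator is squarefree (a "diagonal" monomial of the corresponding $3\times 3$ submatrix). With such an order fixed, the central computation is to verify that the generators form a Gröbner basis of $J$, e.g.\ by pairing up the S-polynomials combinatorially and invoking Buchberger's criterion. Then $\initial(J)$ is a squarefree monomial ideal, and in particular $J$ is radical, so it decomposes as $J = \bigcap_k Q_k$ with $Q_k$ prime, one of which must be $I(C_n)$.

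The remaining task is to show this decomposition has only one component. The combinatorial input here is to analyze the Stanley--Reisner complex $\Delta$ of $\initial(J)$: identify its facets, show $\Delta$ is equidimensional of codimension $\codim I(C_n)$, and check that each facet lifts, under the degeneration, to a prime equal to $I(C_n)$. Equivalently, one can match the Hilbert series of $R/J$ with that of $R/I(C_n)$. The abstract advertises that this reduction step is captured by a general criterion for ideals with radical initial ideals, which I would isolate as an abstract lemma and then specialize to the Sullivant--Talaska input.

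The main obstacle, and the technical crux, is twofold: verifying that the explicit Sullivant--Talaska generators for $C_n$ constitute a Gröbner basis with squarefree leading terms, and showing that every facet of the associated Stanley--Reisner complex lifts to $I(C_n)$ rather than to a spurious extraneous component. The "wrap-around" $3\times 3$ minors present only in the cyclic case — absent for trees or paths, where the Sturmfels--Uhler conjecture is already known — are what make the cycle genuinely subtle; controlling their leading terms, their S-pairs with the non-wrap-around minors, and the resulting simplicial complex is where I expect the bulk of the work to lie.
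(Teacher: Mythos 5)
Your overall frame (a term order giving squarefree leading terms, degeneration to a Stanley--Reisner complex, equidimensionality of that complex) is the same as the paper's, but your plan has a genuine gap precisely at the step that carries all the weight: excluding extraneous components. Equidimensionality of the initial complex in the correct dimension does not prevent $V(J)$ from consisting of $X_{L_{C_n}}$ together with spurious components of the same dimension, so a finer invariant must be compared, and neither of your proposed mechanisms works as stated. ``Each facet lifts, under the degeneration, to a prime equal to $I(C_n)$'' is not a well-defined operation --- facets of the Stanley--Reisner complex of $\initial(J)$ are components of the special fiber, not minimal primes of $J$, and there is no canonical facet-by-facet lifting --- while matching Hilbert series of $R/J$ and $R/I(C_n)$ is unavailable because the Hilbert series of $I(C_n)$ is not known; the only invariant at hand is its degree, computed in \cite{MRV}. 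The paper's Lemma \ref{lem:general} is designed exactly to reduce the comparison to dimension and degree: if the monomial ideal generated by the chosen leading terms is squarefree, equidimensional, and has the same dimension \emph{and degree} as the prime $I(C_n)$, then the minors are automatically a Gr\"obner basis of $I(C_n)$ and hence generate it. The bulk of the work, entirely absent from your proposal, is the resulting facet count: the bijection of facets with Msafts, the proof that every Msaft has cardinality $2n$, the reformulation as secant walks on a M\"obius strip, the Lindstr\"om--Gessel--Viennot lemma together with the reflection principle, and the binomial identities showing the number of facets equals $\frac{n+2}{4}\binom{2n}{n}-3\cdot 2^{2n-3}$, which is then matched against the degree from \cite{MRV}.

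A second, smaller issue: you make verification of Buchberger's criterion for the Sullivant--Talaska generators the ``central computation.'' In the paper this is both avoided and unnecessary: Lemma \ref{lem:general} works directly with the monomial ideal generated by the leading terms of the given polynomials, with no S-pair analysis, and the Gr\"obner-basis property (for $I(C_n)$, and a fortiori the equality of the two ideals) falls out at the end. The only S-pair argument in the paper is a side proposition, proved computationally for $n\le 10$ and extended by induction via Buchberger's second criterion, and it is not used in the proof of the main theorem. So your route front-loads a hard and unneeded computation while omitting the enumerative degree comparison that the argument cannot do without.
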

A detailed description of the special $3\times 3$ minors and the Sullivant-Talaska ideal $I_n$ is given in \S\ref{sec:cyclic}.

In fact, we prove a stronger claim: these special $3\times 3$ minors form a square-free Gr\"obner basis for the ideal of the cyclic Gaussian graphical model under an appropriately chosen term order. One of the key steps is a simple but general Lemma \ref{lem:general}. It says that for any irreducible projective variety $X$, if we have a set of polynomials in $I(X)$ with square-free leading monomials, we just have to check if the leading monomials define an equidimensional variety with the same dimension and degree as $X$, to conclude that this is a Gr\"obner basis. Another recent application of this lemma, coauthored by two authors of this article, can be found in \cite{CMSS}.

Let $I_n$ be the Sullivant-Talaska ideal for the $n$-cycle $C_n$. 
Our proof has following steps.
\begin{enumerate}
  \item In \S\ref{sec:cyclic}, we pick a term order so that the generators of
    $I_n$ have square-free leading monomials, and we study the 
  % (and, as will ultimately be shown, so that the
  % generators form a Gr\"obner basis). 
  % There seem to be
  % several term orders that give square-free Gr\"obner bases, one of them being
  % the one considered in \cite{Conca}. However, in our case the most useful
  % turns out to be the lexicographic order, where variables are larger when they
  % are closer to the diagonal. Details are presented in \S\ref{sec:GB}.
% \item Next we study properties of the 
  radical ideal $J$ generated by the leading
  monomials of the generators.
  The associated variety consists of a union of coordinate subspaces,
  and we establish a bijection between such subspaces and 
  maximal subsets of secants in a regular $n$-gon not containing explicit size
  three subsets. We name such a configuration a Maximal Set Avoiding Forbidden
  Triples, or Msaft. 
 \item 
  The cardinality of an Msaft gives the dimension of the corresponding
  subspace, and in \S\ref{sec:eqdim} we show every Msaft has the same
  cardinality $2n$, i.e.~the initial ideal $J$ is equidimensional of dimension
  $2n$, the same as $I(C_n)$.
% \item Knowing that $J$ is radical, equidimensional and contained in the initial ideal for the prime ideal $I(C_n)$, it is enough to compare the degrees and dimensions of both ideals for the desired equality. This general lemma is proved in \S\ref{subsec:gentools}. %(Lemma \ref{lem:general}).
\item Next, we compute the degree of $J$ by counting the Msafts. To achieve this,
  we observe in \S\ref{subsec:countMS} that one can put a directed graph
  structure on the set of all secants so that each Msaft admits a unique cyclical walk
  in this graph through the secants belonging to it, and we obtain a simpler
  characterization of Msafts directly in terms of such walks. Then, we
  translate the problem of counting such walks as counting pairs of vertex
  disjoint pairs of paths in a certain finite lattice. Using the
  Lindstr\"om-Gessel-Viennot lemma and the reflection principle, we obtain an
  explicit formula using sums and binomial coefficients. 
\item In \S\ref{sec:binoms}, we perform the needed summations to find a closed
  formula for the degree of $J$ which matches the degree of $I(C_n)$ recently
  computed in \cite{MRV}. Applying our general Lemma \ref{lem:general} then
  establishes that the cubic generators of $I_n$ form a Gr\"obner basis for
  $I(C_n)$, which completes the proof.
\end{enumerate}

%As final remarks, we note that in our proof we use a complicated result computing the degree of $L^{-1}_{C_n}$, which was only achieved very recently. However, our proof actually shows that if one could prove our main Theorem \ref{thm:main} without referring to this result, it would also provide an alternative, possibly simpler, proof for the degree of $L^{-1}_{C_n}$. 

%Austin: something on generalisation
\section{Gaussian Graphical Models in a Nutshell}\label{sec:graphical}
Linear concentration models are special Gaussian models introduced by Anderson over half a century ago \cite{Anderson}. In purely mathematical terms such a model is represented by a linear space $L$ of $n\times n$ symmetric matrices. Each positive definite matrix $K\in L$ provides a probability distribution on $\RR^{n}$ proportional to $e^{-{\bf x}^T K{\bf x}}$. The variety $X_L$ associated to the model is the Zariski closure of the locus of inverses of all invertible matrices from $L$ in the space of symmetric $n\times n$ complex matrices. As it is invariant under scaling, we may view it also as a projective variety. To be precise, in statistics, the model would be a semi-algebraic locus given by inverses of positive definite matrices in $L$ (which are also positive definite). 

From the point of view of algebraic statistic the most interesting and well-studied family of linear concentration models are Gaussian graphical models. Consider a simple graph $G$ on $n$ vertices. To such a graph one associates a coordinate subspace $L_G$ of $n\times n$ symmetric matrices defined by:
\[x_{ij}=0 \text{ when } i\neq j \text{ and }\{i,j\}\text{ is not an edge of }G,\]
where the entries of the matrix are $(x_{ij})$. For more information we refer to \cite[\S8.3]{Seth}, \cite{StUh} and \cite{DSS}.

\begin{ex}\label{eg_C5}
Let $G$ be the cycle graph $C_5$ on five vertices. Then, $L_G$ is the following $10$-dimensional subspace in $S^2(\C^5)$ 
%the space of symmetric $5\times 5$ complex matrices 
\[
K=\begin{pmatrix}
x_{11}&x_{12}&0&0&x_{15}\\
x_{12}&x_{22}&x_{23}&0&0\\
0&x_{23}&x_{33}&x_{34}&0\\
0&0&x_{34}&x_{44}&x_{45}\\
x_{15}&0&0&x_{45}&x_{55}\\
\end{pmatrix}. \]
The prime ideal $I(C_5)$ defining $X_{L_{C_5}}$ in $\C[\sigma_{11},\sigma_{12},\ldots,\sigma_{55}]$ can be computed by eliminating the variables $x_{ij}$ from the system of equations
\[ \Sigma \cdot K=I_5~,\]
where $\Sigma=(\sigma_{ij})$ is a symmetric $5\times5$ matrices of variables $\sigma_{ij}$ and $I_5$ is the identity matrix. We note that this kind of elimination computation is usually infeasible for large $n$.
\end{ex}

In this article we focus on the case when $G$ is $C_n$, the $n$-cycle graph. Our main theorem describes the prime ideal $I(C_n)$ of the variety $X_{L_{C_n}}$ in $\P(S^2(\C^n))$. As we prove, it is generated by \textit{cubics} corresponding to special minors of symmetric matrices. To our knowledge, so far a major breakthrough for defining equations was only achieved in the case when $G$ was a \textit{block graph} (i.e. $1$-clique sum of complete graphs). In this case a complete description of the ideal of $X_G$ was recently provided by Misra and Sullivant \cite{MS} confirming another conjecture by Sturmfels and Uhler. However, in those cases the ideal is generated in degrees one and two and moreover is toric. Thus, the methods and approach we present are very different.  

\section{Gr\"obner basis}\label{sec:GB}
\subsection{General tools}\label{subsec:gentools}
\begin{lemma}\label{lem:general}
Let $I$ be a homogeneous prime ideal in $R=\CC[x_0,\dots,x_n]$ and $g_1,\dots,g_k$ be elements in $I$. Assume that for some term order $<$ 
\begin{enumerate}
\item $\initial(g_i)$ is square-free for each $i$,
\item $J:=\langle \initial(g_1),\dots, \initial(g_k)\rangle$ is equidimensional,
\item $\dim J=\dim I$ and $\deg(J)=\deg(I)$.
\end{enumerate}
Then, $\{g_1,\dots,g_k\}$ form a Gr{\"o}bner basis for $I$ with respect to $<$.
\end{lemma}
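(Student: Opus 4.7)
The goal is to show $\initial(I) = J$, since by definition $\{g_1,\dots,g_k\}$ is a Gr\"obner basis for $I$ with respect to $<$ exactly when the initial forms generate $\initial(I)$. The inclusion $J \subseteq \initial(I)$ is automatic from $g_i \in I$. To get the reverse inclusion I will compare $J$ and $\initial(I)$ through their Hilbert polynomials, exploiting the fact that hypothesis (1) forces $J$ to be especially well-behaved.

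The first key observation is that hypothesis (1) makes $J$ a square-free monomial ideal, hence a Stanley--Reisner ideal, hence radical. In particular $R/J$ is a reduced ring, so every associated prime of $R/J$ is a minimal prime. Combined with hypothesis (2), this tells us that every associated prime $P$ of $R/J$ satisfies $\dim R/P = \dim R/J = \dim R/I$. Consequently, any nonzero $R$-submodule $M$ of $R/J$ has $\mathrm{Ass}(M) \subseteq \mathrm{Ass}(R/J)$, so some associated prime of $M$ has dimension $\dim R/I$, forcing $\dim M = \dim R/I$.

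Now consider the short exact sequence
\[
0 \longrightarrow \initial(I)/J \longrightarrow R/J \longrightarrow R/\initial(I) \longrightarrow 0,
\]
so $M := \initial(I)/J$ is a submodule of $R/J$. By standard Gr\"obner theory, $R/I$ and $R/\initial(I)$ share the same Hilbert polynomial, so $\dim \initial(I) = \dim I$ and $\deg \initial(I) = \deg I$. Combining with hypothesis (3), the Hilbert polynomials of $R/J$ and $R/\initial(I)$ have identical top-degree terms (same dimension, same leading coefficient). Therefore the Hilbert polynomial of $M$, which is their difference, has degree strictly less than $\dim R/I$, so $\dim M < \dim R/I$.

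This contradicts the conclusion of the previous paragraph unless $M = 0$, i.e.\ $J = \initial(I)$. I do not anticipate a serious obstacle here: once hypothesis (1) is translated into ``$J$ is radical'' and hypothesis (2) is translated into ``all minimal primes of $J$ are top-dimensional,'' the absence of embedded primes in the Stanley--Reisner ring $R/J$ is precisely what prevents any proper supermodule $\initial(I) \supsetneq J$ of matching degree and dimension from existing. The only subtle point to verify carefully is the equality $\deg J = \deg \initial(I)$ at the level of leading coefficients of Hilbert polynomials, which uses both equidimensionality and the dimension equality in hypothesis (3).
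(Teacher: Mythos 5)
Your argument is correct, and it takes a genuinely different route from the paper. The paper argues geometrically: if a monomial $f\in\initial(I)\setminus J$ existed, then by the Nullstellensatz $V(\initial(I))\subseteq V(J+(f))\subsetneq V(J)$, and since $V(J)$ is a union of $\deg(I)$ coordinate subspaces all of dimension $\dim I$, passing to a proper closed subset forces a strict drop in dimension or in degree, contradicting $\dim\initial(I)=\dim I$ and $\deg\initial(I)=\deg I$. You instead phrase everything module-theoretically: squarefreeness makes $R/J$ reduced, so it has no embedded primes, and equidimensionality then says every associated prime of $R/J$ has dimension $\dim R/I$; hence any nonzero submodule of $R/J$ — in particular $\initial(I)/J$ — must have full dimension $\dim R/I$, while the exact sequence $0\to\initial(I)/J\to R/J\to R/\initial(I)\to 0$ together with the equality of Hilbert functions of $R/I$ and $R/\initial(I)$ and hypothesis (3) shows the Hilbert polynomials of $R/J$ and $R/\initial(I)$ have the same degree and leading coefficient, so $\dim(\initial(I)/J)<\dim R/I$, forcing $\initial(I)/J=0$. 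What your approach buys is a purely algebraic proof (no Nullstellensatz, no discussion of geometric versus Hilbert-polynomial degree of the possibly nonreduced $\initial(I)$) that works verbatim for any radical equidimensional $J$, not just monomial ones; what the paper's buys is an elementary picture in terms of counting coordinate subspaces, which is the picture actually used in the rest of the article. Two small points of precision you should fix when writing this up: the degree of the Hilbert polynomial of a graded module of Krull dimension $d$ is $d-1$ (your "strictly less than $\dim R/I$" should refer to the module's dimension, with the finite-length case $\mathrm{HP}=0$ noted separately), and the trivial case $\dim R/I=0$ (i.e.\ $I$ the irrelevant ideal) should be set aside, as the paper implicitly does.
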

\begin{proof}
First, note that $J$ is a radical monomial ideal by (1). By (2) and (3),  we know that the zero set $V(J)$ is a union of $deg(I)$-many coordinate subspaces of dimension equal to $\dim I$. We claim that $J=\initial(I)$. 

Obviously, we have $J\subset \initial(I)$. To obtain the other containment, let us suppose $J\not\supset \initial(I)$. Then, there exists a monomial $f\in \initial(I)\setminus J$. Since $J\subsetneq J+(f) \subseteq \initial(I)$, by Nullstellensatz, we obtain $$V(\initial(I))\subseteq V(J+(f)) \subsetneq V(J)~$$
(in particular, $V(J+(f))$ is a strictly smaller set than $V(J)$). Since $V(J)$ is a union of linear subspaces of the same dimension, $\dim (J+(f))< \dim J$ or $\deg(J+(f))<\deg(J)$. But, it is impossible, as both $\initial(I)$ and $J$ have the same dimension as $\dim I$ and the same degree as $\deg(I)$.
\end{proof}

\begin{rem} We note that one also has a `partial converse' of Lemma \ref{lem:general} in the following sense: Suppose that $\{g_1,\dots,g_k\}$ is a Gr{\"o}bner basis for $I$ with respect to a term order $<$.

First of all, $J:= \initial(I)$ has the same dimension and degree as $I$ (i.e. condition (3) in Lemma \ref{lem:general} is satisfied). Once we know that $J$ is a radical ideal (i.e. every $\initial(g_i)$ is square-free, which is condition (1)), then we can also see that $J$ is equidimensional (condition (2)) by \cite[Corollary 2.1.8]{BCRV}. Thus, for any term order $<$ providing a radical initial ideal, condition (2) and (3) in Lemma \ref{lem:general} are equivalent to the fact that $\{g_1,\dots,g_k\}$ is a Gr{\"o}bner basis for $I$ with respect to the order $<$.
\end{rem}

\subsection{Cyclic model}\label{sec:cyclic}
Let $R:=\CC[\sigma_{i,j}]_{1\leq i\leq j\leq n}$ be the polynomial ring identified with the ring of polynomials on the space of symmetric $n\times n$ matrices. Let $\Sigma=(\sigma_{i,j})$ be the generic symmetric matrix, filled with the indeterminates. 

The \textit{Sullivant-Talaska ideal} $I_n$ was defined in \cite{StUh}, based on the earlier work by Draisma, Sullivant and Talaska \cite{DST}. Below we present its construction for the $n$-cycle $C_n$. The ideal $I_n \subset R$ is generated by all the $3\times 3$ subminors obtained as follows. We fix a cyclic notation for the set $[n]=\{1,\dots, n\}$. Consider any interval $[a,b]\subset \{1,\dots, n\}$. Please note that it may be of the form $\{a=n-1,n,1,2,b=3\}$. Consider the complimentary interval intersecting the given one only in $a,b$, that is $[b,a]:=([n]\setminus [a,b])\cup \{a,b\}$. For any choice of $a,b$ we take all the $3\times 3$ minors of the $[a,b]\times [b,a]$ submatrix of $\Sigma$, that is indices of rows belong to $[a,b]$ and of columns to $[b,a]$. 

Sturmfels and Uhler \cite{StUh} conjectured that $I_n$ is the (radical) ideal defining the irreducible variety $X_{L_{C_n}}$ associated to the cyclic Gaussian graphical model (note that for $n=3$ the conjecture holds trivially, since $X_{L_{C_3}}$ is the whole space and both ideals are zero).

In order to better understand $I_n$ for any $n\ge4$, we fix the following term order on $R$. 
\begin{defi}\label{def:order}
We consider any lexicographic order on  $R$, where $\sigma_{i,j}>\sigma_{i',j'}$, whenever $|i-j|<|i'-j'|$ and otherwise we order the variables arbitrary. Less formally: the closer to the diagonal, the bigger the variable. 
\end{defi}

\begin{ex}\label{eg_C5_eqn}
Let $G$ be the cycle graph $C_5$ as in Example \ref{eg_C5}. When we take two vertices, say $1$ and $3$, we have a $3\times4$ submatrix $[1,3]\times [3,1]$ of $\Sigma$ as below:
%\begin{align*}
%\begin{tikzpicture}[scale=0.9]
%\begin{scope}[shift={(0.000000cm,-0.000000cm)}]
%\tikzset{every node/.style={circle,fill,draw=black,minimum size = 0.1cm,inner sep=0cm}}
%\node (0) at (0.000000,1.000000) {};
%\node (1) at (-0.951057,0.309017) {};
%\node (2) at (-0.587785,-0.809017) {};
%\node (3) at (0.587785,-0.809017) {};
%\node (4) at (0.951057,0.309017) {};
%\path[-,every loop/.style={min distance=0.3cm}]
% (0) edge (1)
% %(0) edge (2)
% (0) edge (3)
% (0) edge (4)
% (1) edge (2)
% %(1) edge (3)
% %(1) edge (4)
% (2) edge (3)
% %(2) edge (4)
% (3) edge (4)
%;\end{scope}
%\end{tikzpicture} &&
\[
\begin{pmatrix}
\sigma_{11}&\sigma_{13}&\sigma_{14}&\sigma_{15}\\
\sigma_{21}&\sigma_{23}&\sigma_{24}&\sigma_{25}\\
\sigma_{31}&\sigma_{33}&\sigma_{34}&\sigma_{35}
\end{pmatrix}
\]
and all the $3\times3$ minors provide equations of its Sullivant-Talaska ideal $I_5$. For instance, if we choose $1,2,3$ as row indices and $3,4,5$ as column indices for a minor, then we obtain a cubic defining equation
\[\sigma_{13}\sigma_{24}\sigma_{35}-\sigma_{13}\sigma_{25}\sigma_{34}-
\sigma_{23}\sigma_{14}\sigma_{35}+
\sigma_{23}\sigma_{15}\sigma_{34}+
\sigma_{33}\sigma_{14}\sigma_{25}-
\underline{\sigma_{33}\sigma_{15}\sigma_{24}},\]
where the underlined monomial is leading with respect to a term order in
Definition \ref{def:order}. 
\end{ex}

\begin{rem}
Ideals of minors of symmetric matrices have been intensively studied, also by choosing a good term order. For example, Conca \cite{Conca} described properties of the ideal generated by \textit{all} the minors of a fixed size, where the order comes from reading the variables row by row. Such order, as we checked in our experiments, leads to a square-free Gr\"obner basis also for $I_n$, however not a cubic one. Additional generators in degree five appear, which makes the analysis that is about to follow extremely hard to carry out. As we will see, with respect to the term order defined above, the degree three generators alone form a Gr\"obner basis.
\end{rem}
\begin{defi}
We identify the set $\{1,\dots, n\}$ with vertices of a regular $n$-gon. A \emph{secant} is a multiset of two vertices. In particular, it may be a loop, an edge of an $n$-gon or a classical secant known in geometry.
\end{defi}

We identify $\sigma_{i,j}$ with a secant of a regular $n$-gon, joining $i$-th and $j$-th vertex (in particular, when $i=j$ we obtain a loop, a degenerate one). Accordingly, a multiset of secants corresponds to a monomial in $R$. In the following when we depict a subset of secants of the regular $n$-gon, we will suppress the $n$-gon edges for clarity.
\begin{defi}
We call $k$ distinct secants $\alpha_1=\{b_1,c_1\},\dots,\alpha_k=\{b_k,c_k\}$ of the $n$-gon a \emph{forbidden $k$-tuple} if:
\begin{itemize}
\item their ending points are distinct, i.e.~if $i\neq j$ then $\{b_i,c_i\}\cap\{b_j,c_j\}=\emptyset$
%$b_i\neq b_j$ and $c_i\neq c_j$ 
and,
\item one may cut the $n$-gon with a line going through two vertices $a,b$ so that all $b_i$'s lie on one side of the line (including $a,b$) and all $c_i$'s lie on the other side (also including $a,b$). Further, going clockwise from $a$ we encounter $b_1,b_2,\dots,b_k$ in that order and if we go counterclockwise we encounter $c_1,c_2,\dots,c_k$ in this order. 
\end{itemize}
\end{defi}

%PICTURES FOR EXAMPLES AND NONEXAMPLES

%\begin{figure}[H]
%\begin{tikzpicture}
%\foreach \x in {1,2,...,15}{
%\filldraw [black] (\x,0) circle (3pt);}
%\draw [black, ultra thick] (2,0)--(5,0); 
%\draw [black, ultra thick] (6,0)--(9,0);
%\draw [black, ultra thick] (11,0)--(12,0);
%\draw [black, ultra thick] (13,0)--(15,0); 
%\foreach \x in {1,2,...,15}{
%\node [right, black, ultra thick] at (\x-0.2,-0.3) {\x};}
%\end{tikzpicture}
%\end{figure}

\begin{figure}[htb!]
\begin{center}
\begin{tikzpicture}[scale=0.8]
\input{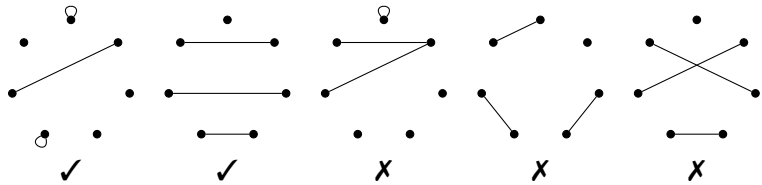}
\end{tikzpicture}
\end{center}
\caption{Examples (on the left) and non-examples (on the right) of forbidden triples for $n=7$}
\end{figure}

\begin{rem}\label{rem:middle}
  Equivalently, a triple ($k=3$) is forbidden if the secants are
  non-intersecting and one forms the \emph{middle}, that is one divides the
  $n$-gon into two regions each containing one secant in the triple. 
% We note that a forbidden triple $\{\alpha,\beta,\gamma\}$ has always a middle
% secant, say $\alpha$, which subdivides the polygon into two parts, with one
% other secant from the triple in each part. We will say that $\beta$ and
% $\gamma$ are on different sides of $\alpha$, while $\alpha$ and $\beta$ are on
% one side of $\gamma$. 
\end{rem}
\begin{lemma}\label{lem:leading}
The leading monomials of the $3\times 3$ minors generating $I_n$ with respect to a term order in Definition \ref{def:order} correspond to forbidden triples. 
\end{lemma}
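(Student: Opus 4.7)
The plan is to identify the leading monomial of each generating $3 \times 3$ minor explicitly, and then recognize it as a forbidden triple.

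Consider a minor of the submatrix $(\sigma_{r_i, c_j})_{i,j = 1}^{3}$, where the row indices $r_1, r_2, r_3 \in [a,b]$ are ordered clockwise from $a$ and the column indices $c_1, c_2, c_3 \in [b,a]$ are ordered clockwise from $b$. I claim the leading monomial under the order of Definition \ref{def:order} is the anti-diagonal product
\[
\sigma_{r_1, c_3} \, \sigma_{r_2, c_2} \, \sigma_{r_3, c_1}.
\]

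To prove this, I argue by a swap lemma on the six permutations $\pi \in S_3$ appearing in the determinant expansion. If $\pi$ is not the reverse permutation, pick $i < j$ with $\pi(i) < \pi(j)$, and let $\pi'$ be the permutation obtained from $\pi$ by swapping these two values. The two monomials $m_\pi$ and $m_{\pi'}$ differ only in the factors coming from rows $r_i, r_j$. A short case analysis, depending on whether each of $c_{\pi(i)}, c_{\pi(j)}$ lies before or after the wrap of the cyclic interval $[b,a]$, establishes
\[
\min\bigl(|r_i - c_{\pi(j)}|,\,|r_j - c_{\pi(i)}|\bigr) \;<\; \min\bigl(|r_i - c_{\pi(i)}|,\,|r_j - c_{\pi(j)}|\bigr).
\]
In each subcase the inequality reduces, after resolving absolute values by the known signs, to one or two elementary strict inequalities; for instance, in the subcase $a \le r_i < r_j \le b \le c_{\pi(i)} < c_{\pi(j)} \le n$, the quantity $c_{\pi(i)} - r_j$ is strictly less than both $c_{\pi(i)} - r_i$ and $c_{\pi(j)} - r_j$. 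Hence $m_{\pi'}$ contains a variable of strictly smaller $|i-j|$ than any variable in $m_\pi$, making $m_{\pi'}$ strictly lex-larger. Iterating drives $\pi$ to the reverse permutation, which is therefore the unique maximizer.

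Finally, I verify that the anti-diagonal product corresponds to a forbidden triple: the three chords $\{r_1, c_3\}, \{r_2, c_2\}, \{r_3, c_1\}$ are pairwise non-crossing and nested, using $a$ and $b$ as the cutting vertices on the $n$-gon. Reading clockwise from $a$ one meets the row endpoints in the order $r_1, r_2, r_3$, while reading counterclockwise from $a$ one meets the column endpoints in the order $c_3, c_2, c_1$, matching the data required in the definition of a forbidden triple. Degenerations in which $a$ or $b$ is simultaneously a row and a column index merely produce a loop at that vertex, which the definition allows since $a$ and $b$ themselves lie on both sides of the cutting line. The main delicacy is the mismatch between the linear distance $|i-j|$ governing the term order and the cyclic arrangement of secants on the $n$-gon; this forces the subcase split in the swap inequality, but each case is settled by a one-line computation.
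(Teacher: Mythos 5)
Your main computation is correct, and it takes a genuinely different route from the paper. The paper first proves an extremal claim: among all pairings of the chosen rows with the chosen columns, the largest variable is one of the two extreme nested pairs $x_{b_1,c_1}$ or $x_{b_k,c_k}$, proved by observing that the linear path realizing $|p-q|$ (the one avoiding the edge $\{1,n\}$) for any other pairing strictly contains the linear path of a nested pairing; it then extracts the leading monomial greedily, using that the order is lexicographic. Your swap argument on $S_3$ is essentially the $k=2$ instance of that claim: the inequality $\min(|r_i-c_{\pi(j)}|,|r_j-c_{\pi(i)}|)<\min(|r_i-c_{\pi(i)}|,|r_j-c_{\pi(j)}|)$ says precisely that the larger of the two nested variables beats the larger of the two crossing ones, and it is true in all configurations. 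What your version buys is an explicit closed form for the leading term (the anti-diagonal in your labelling, which indeed matches the underlined monomial in Example \ref{eg_C5_eqn}); what the paper's version buys is uniformity, since the path-containment argument needs no case split on the wraparound. One wording slip: after a swap the new monomial need not contain a variable larger than \emph{every} variable of the old one (the untouched third factor may be the largest of all three); the correct deduction is that the two swapped factors already compare in lex and the order is multiplicative, so the common factor is irrelevant.

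There are two genuine omissions. First, you prove only one inclusion: that the leading monomial of each generating minor is a forbidden triple. The lemma is used later (e.g., in Proposition \ref{prop:bijectionMSaft}) as an identification of the set of leading monomials of the generators with the set of \emph{all} forbidden triples, and the paper proves the converse explicitly: given a forbidden triple witnessed by a cut $\{a,b\}$, its row endpoints lie in $[a,b]$ and its column endpoints in $[b,a]$, so the corresponding $3\times 3$ minor is a generator of $I_n$, and by your own computation its leading term is exactly that triple. In your framework this is one sentence, but it must be stated. Second, your case analysis for the swap inequality is parametrized only by where $c_{\pi(i)},c_{\pi(j)}$ sit relative to the wrap of $[b,a]$; the interval $[a,b]$ may itself contain the consecutive pair $\{n,1\}$ (the paper allows $[a,b]=\{n-1,n,1,2,3\}$), in which case the wrap falls among the rows and your listed subcases do not cover it. One can reduce to the treated situation by the symmetry of $\Sigma$, interchanging the roles of $a$ and $b$ (equivalently transposing the submatrix), but this reduction, or the extra subcases, must be made explicit. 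With these repairs the argument is complete.
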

\begin{proof}
We start by proving a general fact as follows. 

\underline{Claim:} Given any subdivision of an $n$-gon by a line through $a,b$ and a choice of distinct vertices $b_1,\dots, b_k$ on one side in the clockwise order and $c_1,\dots, c_k$ on the other side in the counterclockwise order the largest variable $x_{b_i,c_j}$ is either $x_{b_1,c_1}$ or $x_{b_k,c_k}$. 
\begin{proof}[Proof of the claim]
We note that $|p-q|$ is the number of vertices we pass on an $n$-gon while joining $p$ and $q$, not passing through the edge $\{1,n\}$. If we do not start from $b_1$ or $b_k$ and go to $c_j$ we have to pass through either $b_1$ or $b_k$ on our way, thus we cannot be minimal in length. Also the path from $b_1$ to $c_k$ (resp.~from $b_k$ to $c_1$) has to pass through other $b_i$'s or $c_j$'s, that is not a minimal one. Thus, the only remaining possibilities are the two given ones in the claim. \phantom\qedhere % suppress qed
\end{proof}
For any $3\times 3$ minor $m$ in $I_n$ (say $m$ is a minor of a submatrix $[a,b]\times [b,a]$ and rows indexed by $b_1,b_2,b_3$ clockwisely and columns by $c_1,c_2,c_3$ counterclockwisely), by Laplace expansion, we note the monomials appearing in $m$ correspond to perfect pairings on the bipartite $K_{3,3}$ given by $b_i$'s and $c_i$'s. By the claim above, we see that choosing the largest variable (among the edges of $K_{3,3}$) corresponds to choosing the secant `closest' to $a$ or $b$. Since our term order is lexicographic, choosing the largest variable each time inductively leads to the leading monomial of $m$ whose secants do not intersect and form a forbidden triple. 
%We note that the monomials appearing in the $3\times 3$ minor associated to a forbidden triple correspond to perfect pairings on the $K_{3,3}$ given by $b_i$'s and $c_i$'s. Inductively down, starting with $k=3$, we see that choosing the largest variable, corresponds to choosing the edge `closest' to $a$ or $b$. In particular, the secants corresponding to the leading monomial do not intersect and form a forbidden triple. 

Conversely, given a forbidden triple $\alpha_1=\{b_1,c_1\},\alpha_2=\{b_2,c_2\},\alpha_3=\{b_3,c_3\}$, where $a,b$ justify it is forbidden, let us consider the $3\times 3$ minor with rows $b_1,b_2,b_2$ and columns $c_1,c_2,c_3$. Then, it is straightforward to see that this minor is among the generators of $I_n$ as it is a minor of the submatrix $[a,b]\times [b,a]$. %Clearly also the choice of the minor and the matrix $[a,b]\times [b,a]$ determines $b_1,b_2,b_3$ and $c_1,c_2,c_3$ up to permutations on 3 elements. 
\end{proof}
We will obtain a computer-free proof that all these $3\times 3$ minors form a Gr\"obner basis of $I(C_n)$ in the end (see Remark \ref{GBness}). Below, we present a short proof of a weaker fact, that is based on extensive computations.  
\begin{prop}
The $3\times 3$ minors form a Gr\"obner basis of the ideal $I_n$ for a term order $<$ defined in Definition \ref{def:order}. In particular, the cubic square free monomials corresponding to forbidden triples generate $\initial(I_n)$. 
\end{prop}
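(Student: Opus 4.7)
The plan is to verify Buchberger's criterion for the set of cubic generators: one must show every S-polynomial of two generators reduces to zero modulo the generating set. By Lemma \ref{lem:leading} the leading monomial of each generator is a square-free cubic corresponding to a forbidden triple, so it involves exactly three distinct variables. For any pair of generators $f,g$ whose leading monomials are coprime, the S-polynomial reduces to zero by the standard product criterion. Hence it suffices to check pairs whose forbidden triples share at least one secant.

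The next step is to bound the combinatorial complexity of overlapping pairs. A forbidden triple is determined by a cutting chord $\{a,b\}$ together with three pairwise non-crossing secants crossing it, so two overlapping forbidden triples are supported on at most twelve vertices of the $n$-gon. Since each S-polynomial only involves variables indexed by the union of the supports of $\initial(f)$ and $\initial(g)$, and since the term order from Definition \ref{def:order} depends only on the cyclic distances $|i-j|$, every S-polynomial reduction takes place inside a bounded ambient configuration that is independent of the total size $n$. Up to the rotational and reflective symmetry of $C_n$ and relabeling, there are therefore only finitely many combinatorial types of overlapping pairs to analyze.

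Each such combinatorial type is handled computationally: one writes down a representative S-polynomial, applies the division algorithm against the list of $3\times 3$ minors that share support with it, and checks that the remainder is zero. Equivalently, one computes the full Gr\"obner basis of $I_n$ in a computer algebra system for all $n$ up to a threshold $n_0$ large enough that every overlapping configuration first appears (a small constant, say $n_0 \le 12$), and verifies that no generators outside the Sullivant--Talaska list arise. The locality argument of the previous paragraph then lifts these checks to every $n \ge 4$. The ``in particular'' claim is immediate once the Gr\"obner basis property is established, since the leading monomials of the generators are exactly the square-free cubics corresponding to forbidden triples by Lemma \ref{lem:leading}.

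The main obstacle is the locality justification in the second paragraph: one must argue carefully that a reduction witnessed inside a bounded subset of variables for some fixed small $n$ yields a valid reduction modulo the Sullivant--Talaska generators for arbitrary $n$, i.e.\ that no ``new'' generators arising for larger $n$ are needed to reduce a given S-pair. This follows because both the S-polynomial and all cofactors in the reduction only involve variables $\sigma_{i,j}$ whose indices lie in the combined support of the two original leading monomials, and the cofactor generators are themselves $3\times 3$ minors supported on this same bounded index set, which are generators of $I_n$ for every sufficiently large $n$. Once this is in place, the finite computational check completes the proof.
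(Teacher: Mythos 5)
Your argument is essentially the paper's own proof of this proposition: discard S-pairs with coprime leading monomials by the product criterion, observe that the remaining pairs involve only a bounded set of vertices of the $n$-gon, verify those cases computationally for small $n$, and transfer the resulting reductions to arbitrary $n$ because the minors involved are Sullivant--Talaska generators for every $n$. The paper tightens your bound from twelve to ten vertices (a shared variable forces shared endpoints, so the two minors lie in a $10\times 10$ principal submatrix, and the computation is done for $n\le 10$), and its justification of the transfer --- that the order of Definition \ref{def:order} ``restricts to a term order of the same kind'' on principal submatrices --- is the same locality point you flag; your phrase ``depends only on the cyclic distances $|i-j|$'' glosses this in the same way (the order uses literal differences, whose restriction to a vertex subset need not be of the defined kind; what makes the transfer work is that the leading monomials of the relevant minors are the same forbidden-triple products under any of these restricted orders).
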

\begin{proof}
For $n\leq 10$ the statement can be checked using computational methods. We note that in particular, by Buchberger's algorithm, all S-pairs of minors reduce to zero.

Fix $n>10$ and consider two $3\times 3$ minors $m_1$, $m_2$. If these minors do not share variables then we do not have to consider the S-pair by Buchberger's second criterion. However, if they do share a variable, they belong to a $10\times 10$ principal submatrix of $K$, corresponding to a $10$ element subset of $\{1,\dots, n\}$ containing all row and column indices of $m_1$ and $m_2$. As our term order restricts to a term order of the same kind for principal submatrices, we conclude that the S-pair of $m_1$ and $m_2$ also reduces to zero by induction.
\end{proof}

\section{Equidimensionality}\label{sec:eqdim}
Our next aim is understanding of the variety defined by the radical monomial ideal $J$ which is generated by the square free leading terms of the $3\times 3$ minors generating $I_n$.
\begin{defi}
A \emph{maximal set avoiding forbidden triples (Msaft)} is an inclusion maximal subset of secants that does not contain any forbidden triple.
\end{defi}

For instance, in Figure \ref{AllMsafts} we present all possible Msafts in case of $n=4, 5$. Note that from the definition you will find at least one forbidden triple when you add one more secant to a given Msaft.

\begin{figure}[hbtp!]
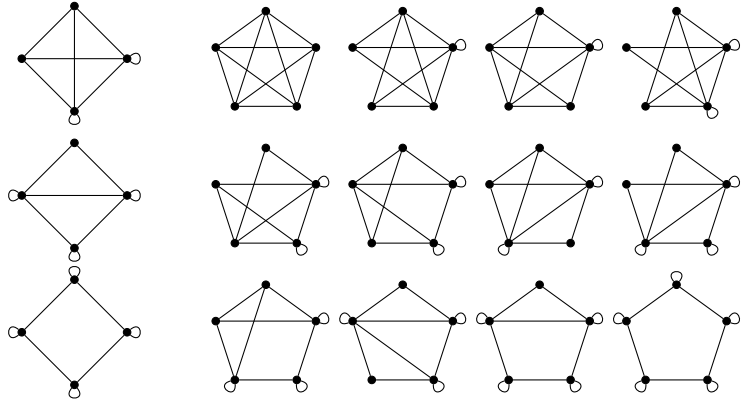

\begin{center}
\begin{tikzpicture}[scale=0.7]
\input{plot4.tex}
\end{tikzpicture} \qquad
\begin{tikzpicture}[scale=0.7]
\input{plot5.tex}
\end{tikzpicture}
\end{center}
\caption{All Msafts  up to dihedral symmetries for $n=4$ (left) and $n= 5$ (right)}
\label{AllMsafts}
\end{figure}

\begin{prop}\label{prop:bijectionMSaft}
The variety $V(J)$ is a union of coordinate subspaces, each one spanned by basis vectors corresponding to elements of an Msaft. In particular, there is a bijection between irreducible components of $V(J)$ and Msafts.
\end{prop}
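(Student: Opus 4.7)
The plan is to apply the standard Stanley--Reisner (facet/vertex-cover) dictionary for squarefree monomial ideals. By Lemma \ref{lem:leading}, every generator of $J$ is a squarefree cubic $\sigma_{\alpha_1}\sigma_{\alpha_2}\sigma_{\alpha_3}$ with $\{\alpha_1,\alpha_2,\alpha_3\}$ a forbidden triple, so $J$ is already a radical monomial ideal. It is a general fact that the zero set of any monomial ideal is a union of coordinate subspaces, so the content of the proposition is to identify the irreducible components of $V(J)$ with Msafts.

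The first step is to recall the minimal primary decomposition of any squarefree monomial ideal: writing
\[
J=\bigl\langle\textstyle\prod_{\alpha\in T_1}\sigma_\alpha,\dots,\prod_{\alpha\in T_r}\sigma_\alpha\bigr\rangle,
\]
one has
\[
J=\bigcap_{S}\langle\sigma_\alpha:\alpha\in S\rangle,
\]
where $S$ ranges over the minimal \emph{vertex covers} of the hypergraph $\mathcal{H}=\{T_1,\dots,T_r\}$, i.e.\ the minimal subsets of secants meeting every forbidden triple $T_i$. Each minimal prime $\langle\sigma_\alpha:\alpha\in S\rangle$ cuts out the coordinate subspace spanned by $\{e_\alpha:\alpha\in M\}$, where $M:=S^c$. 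Hence the irreducible components of $V(J)$ are exactly these coordinate subspaces, parametrised by minimal vertex covers.

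The second step is to translate the combinatorial condition on $S$. A subset $S$ of secants meets every forbidden triple if and only if its complement $M=S^c$ contains no forbidden triple. The map $S\mapsto S^c$ is an inclusion-reversing involution on the power set of the set of secants, so it restricts to a bijection between minimal vertex covers and inclusion-maximal subsets of secants avoiding forbidden triples, that is, Msafts. Combining this with the first step yields the claimed bijection, each Msaft $M$ corresponding to the irreducible component spanned by $\{e_\alpha:\alpha\in M\}$.

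The proof rests only on the Stanley--Reisner correspondence together with the elementary duality between vertex covers and independent sets of a hypergraph, so I do not expect any serious obstacle; the real substance of the paper is in what follows, namely showing that these Msafts all have the same cardinality (equidimensionality, \S\ref{sec:eqdim}) and counting them (\S\ref{subsec:countMS} and \S\ref{sec:binoms}).
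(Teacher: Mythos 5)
Your proof is correct and is essentially the paper's argument: the paper simply observes directly that the zero set of a monomial ideal is the union of the coordinate subspaces spanned by inclusion-maximal sets of basis vectors avoiding the supports of the generators, which by Lemma \ref{lem:leading} are exactly the forbidden triples, so the maximal such sets are the Msafts. Your Stanley--Reisner/minimal-vertex-cover formulation is just the algebraic phrasing of this same geometric fact, so there is nothing genuinely different to compare.
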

\begin{proof}
The zero set of a monomial ideal is always a union of coordinate subspaces, spanned by inclusion maximal sets of basis vectors, not containing the dual vectors to support of monomial generators.
The proposition is thus a corollary of Lemma \ref{lem:leading}.
\end{proof}
Our next aim is to better understand the combinatorial properties of Msafts and build a bijection with lattice walks circling twice a M\"obius strip. We start with a useful `moving lemma' as follows.
\begin{lemma}\label{lem:oneoftwo}
Suppose a secant $\alpha:=\{i,j\}$ belongs to an Msaft $M$ (where possibly $i=j$). Then at least one of $\beta:=\{i,j+1\}$ and $\gamma:=\{i+1,j\}$ and at least one of
$\beta':=\{i-1,j\}$ and $\gamma':=\{i,j-1\}$ must belong to $M$. Here, addition and subtraction is modulo $n$.

If additionally, $\alpha':=\{i-1,j+1\}$ belongs to $M$, then at least
one of $\delta := \{i-1,j+2\}$ and $\gamma$ belongs to $M$, and at least one of
$\delta' := \{i-2,j+1\}$ and $\gamma'$ belongs to $M$.
\begin{center}
\begin{tikzpicture}
\tikzset{every node/.style={circle,fill,draw=black,minimum size = 0.1cm,inner sep=0cm}}
\begin{scope}
\foreach \i in {0,...,11}
\node (\i) at (90+30*\i:1cm) {};
\node [label=left:$i$] at (3) {};
\node [label=left:$i+1$] at (4) {};
\node [label=right:$j$] at (9) {};
\node [label=right:$j+1$] at (10) {};
\draw (3) -- (9) node [midway,draw=none,fill=white] {$\alpha$};
\draw [dashed] (4) -- (9) node [near start,draw=none,fill=white] {$\gamma$};
\draw [dashed] (3) -- (10) node [near end,draw=none,fill=white] {$\beta$};
\end{scope}
\begin{scope}[shift={(4.5cm,0cm)}]
\foreach \i in {0,...,11}
\node (\i) at (90+30*\i:1cm) {};
\node [label=left:$i$] at (3) {};
\node [label=left:$i-1$] at (2) {};
\node [label=right:$j$] at (9) {};
\node [label=right:$j-1$] at (8) {};
\draw (3) -- (9) node [midway,draw=none,fill=white] {$\alpha$};
\draw [dashed] (2) -- (9) node [near start,draw=none,fill=white] {$\beta'$};
\draw [dashed] (3) -- (8) node [near end,draw=none,fill=white] {$\gamma'$};
\end{scope}
\begin{scope}[shift={(0cm,-2.7cm)}]
\foreach \i in {0,...,11}
\node (\i) at (75+30*\i:1cm) {};
\node [label=left:$i-1$] at (3) {};
\node [label=left:$i$] at (4) {};
\node [label=left:$i+1$] at (5) {};
\node [label=right:$j$] at (9) {};
\node [label=right:$j+1$] at (10) {};
\node [label=right:$j+2$] at (11) {};
\draw (4) -- (9) node [midway,draw=none,fill=white] {$\alpha$};
\draw (3) -- (10) node [midway,draw=none,fill=white] {$\alpha'$};
\draw [dashed] (5) -- (9) node [near start,draw=none,fill=white] {$\gamma$};
\draw [dashed] (3) -- (11) node [near end,draw=none,fill=white] {$\delta$};
\end{scope}
\begin{scope}[shift={(4.5cm,-2.7cm)}]
\foreach \i in {0,...,11}
\node (\i) at (75+30*\i:1cm) {};
\node [label=left:$i-2$] at (2) {};
\node [label=left:$i-1$] at (3) {};
\node [label=left:$i$] at (4) {};
\node [label=right:$j-1$] at (8) {};
\node [label=right:$j$] at (9) {};
\node [label=right:$j+1$] at (10) {};
\draw (4) -- (9) node [midway,draw=none,fill=white] {$\alpha$};
\draw (3) -- (10) node [midway,draw=none,fill=white] {$\alpha'$};
\draw [dashed] (4) -- (8) node [near end,draw=none,fill=white] {$\gamma'$};
\draw [dashed] (2) -- (10) node [near start,draw=none,fill=white] {$\delta'$};
\end{scope}
\end{tikzpicture}
\end{center}
\end{lemma}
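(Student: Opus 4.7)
The plan is to establish the first claim (that at least one of $\beta,\gamma$ lies in $M$) by contradiction, with the remaining statements following by analogous arguments. Suppose $\alpha\in M$ while $\beta,\gamma\notin M$. Maximality of $M$ forces the existence of $\beta_1,\beta_2\in M$ making $\{\beta,\beta_1,\beta_2\}$ a forbidden triple and of $\gamma_1,\gamma_2\in M$ making $\{\gamma,\gamma_1,\gamma_2\}$ a forbidden triple. The aim is to convert one of these into a forbidden triple already sitting inside $M$, contradicting the Msaft property.

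The guiding principle is that $\alpha$, $\beta$, and $\gamma$ share a great deal of geometry: the two arcs cut by $\beta=\{i,j+1\}$ differ from those cut by $\alpha=\{i,j\}$ only in which side of the chord carries the vertex $j+1$, and similarly for $\gamma$. I would case-split the forbidden triple $\{\beta,\beta_1,\beta_2\}$ according to which of its three secants is the \emph{middle} (Remark \ref{rem:middle}). In every case, so long as no $\beta_k$ uses the vertex $j$, the same middle continues to separate the partners after replacing $\beta$ by $\alpha$, so $\{\alpha,\beta_1,\beta_2\}$ is a forbidden triple contained in $M$ --- contradiction.

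The key obstacle is the degenerate subcase in which some $\beta_k$ uses vertex $j$ (and analogously some $\gamma_k$ uses vertex $i$): then the direct substitution fails because distinct endpoints are violated. To overcome this I would invoke both forbidden triples simultaneously. Let $\beta_2$ denote the partner of $\beta$ lying in the arc from $j+2$ to $i-1$ and $\gamma_1$ the partner of $\gamma$ lying in the arc from $i+2$ to $j-1$; these are automatically free of the vertices $i$ and $j$. In several combinations of the bad subcases, the triple $\{\alpha,\beta_2,\gamma_1\}$ itself is forbidden with $\alpha$ as middle, placing a forbidden triple inside $M$. In the remaining combinations, when for instance $\gamma_1=\{p',i\}$ is the middle of $\{\gamma,\gamma_1,\gamma_2\}$, the bad form $\beta_1=\{k,j\}$ forces both its endpoints into the arc from $i+1$ to $j$, which lies entirely on the $\gamma$-side of $\gamma_1$; thus $\{\gamma_1,\beta_1,\gamma_2\}$ is a forbidden triple with $\gamma_1$ as middle, contained in $M$. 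Enumerating the four combinations of degenerate subcases on each side closes every scenario.

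The second claim (about $\beta',\gamma'$) follows from the first by reflecting the $n$-gon, which exchanges the roles of $\{\beta,\gamma\}$ with $\{\gamma',\beta'\}$. For the third, suppose $\alpha,\alpha'\in M$ but $\delta,\gamma\notin M$, and extract forbidden triples $\{\delta,\delta_1,\delta_2\}$, $\{\gamma,\gamma_1,\gamma_2\}$ by maximality; since $\delta$ is a one-vertex shift of $\alpha'$ and $\gamma$ is a one-vertex shift of $\alpha$, the clean substitutions yield $\{\alpha',\delta_1,\delta_2\}\subset M$ or $\{\alpha,\gamma_1,\gamma_2\}\subset M$, and the degenerate subcases are resolved by the same simultaneous use of both forbidden triples. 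The final statement about $\delta',\gamma'$ is symmetric under reflection of the $n$-gon.
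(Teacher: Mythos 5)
Your treatment of the first claim is essentially sound, and it is a genuinely different organization from the paper's: where you case-split on which secant is the \emph{middle} of each extracted forbidden triple and recombine (using the key observation, implicit in your ``four combinations'', that the clean partner can never be the middle because $j$ and $j{+}1$, resp.\ $i$ and $i{+}1$, are adjacent), the paper runs a shorter counting argument: with $b$ (resp.\ $c$) the number of partners of $\beta$ above it (resp.\ of $\gamma$ below it), either $b+c\ge 2$ and one recombines with $\alpha$ as middle, or $(2-b)+(2-c)\ge 3$ and any three of the remaining partners form a forbidden triple. Both routes work for the first claim, and the reflection argument for $\beta',\gamma'$ is fine.

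The gap is in the second half (the claim about $\delta$ and $\gamma$ when $\alpha'\in M$), which you dismiss with ``the degenerate subcases are resolved by the same simultaneous use of both forbidden triples.'' The analogy does not transplant: in the first claim the bad partner $\beta_1=\{k,j\}$ automatically avoids $i$ (it must avoid the endpoints of $\beta$), which is exactly what made $\{\gamma_1,\beta_1,\gamma_2\}$ vertex-disjoint. For the $\delta$-triple the partners are only required to avoid $i-1$ and $j+2$, so the bad partner $\tilde\delta\ni j{+}1$ may use $i$ --- it can literally be $\beta=\{i,j+1\}$ --- and likewise the bad $\gamma$-partner $\tilde\gamma\ni i$ may use $j+1$ and also be $\beta$. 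In those configurations the triple you would form by imitating the first-claim argument fails the distinct-endpoint condition, so the scenario is not closed as written. It is fixable, but with different triples: if $\delta$ (resp.\ $\gamma$) is the middle of its triple, the far partner lies strictly above $\alpha'$ (resp.\ strictly below $\alpha$) and $\{\alpha,\alpha',\delta_2\}$ (resp.\ $\{\gamma_2,\alpha,\alpha'\}$) is forbidden --- note this uses the pair $\alpha,\alpha'$ together, which your sketch never does; and if both bad partners are the middles of their triples, one needs a secant strictly below $\beta$ and one strictly above $\beta$ separated by a common middle, e.g.\ $\{\delta_2,\tilde\delta,\gamma_2\}$, or, as the paper does, the triple with $\beta$ itself as middle, invoking $\beta\in M$ from the first claim (a reuse your proposal does not anticipate). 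Until these cases are spelled out, the proof of the second half is incomplete.
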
 

We note that in case $i=j$, i.e.~$\alpha$ is a loop, the lemma implies that $M$ contains the two adjacent edges, as then $\beta=\gamma$ and $\beta'=\gamma'$. 
% The proof below simplifies significantly in this case, as well as in the case where $i$ and $j$ are consecutive vertices. 
\begin{proof}
  We present the proof in the case $\alpha$ and $\alpha'$ are neither edges nor loops. In these remaining cases the argument is left to the reader.
  
  Let us name (arbitrarily) the two sides of the $n$-gon bisected by $\alpha$ the
  \emph{top}, containing $j+1$, and the \emph{bottom}, containing $i+1$. 
  Suppose neither
  $\beta$ nor $\gamma$ belong to $M$. Then there must be a forbidden triple
  containing $\beta$ and two secants belonging to $M$, and similarly for
  $\gamma$. Let $b \in \{0,1,2\}$ be the number of secants belonging to $M$
  lying in the top side of the $n$-gon in the triple containing $\beta$, and
  let $c\in \{0,1,2\}$ be the number of secants belonging to $M$ lying in the
  bottom side of the $n$-gon in the triple containing $\gamma$. If $b+c \ge 2$,
  then any size three subset of the corresponding secants and $\alpha$
  is a forbidden triple of secants belonging to $M$. If $b+c <2$, then
  $(2-b)+(2-c) \ge 3$ counts the set of secants in the triple with and below
  $\beta$ and those in a triple with and above $\gamma$, any size three subset
  of which is a forbidden triple of secants belonging to $M$. In either case,
  we have a forbidden triple consisting of secants belonging to $M$, which is a
  contradiction, thus, $\beta$ or $\gamma$ must belong to $M$. Similarly,
  $\beta'$ or $\gamma'$ must belong to $M$.

  The proof of the second claim is similar. Suppose neither $\delta$ nor
  $\gamma$ belong to $M$, so that each is contained in a forbidden triple with
  two secants belonging to $M$. Let $b$ be the number of secants in the triple
  for $\delta$ in the top side of the $n$-gon, and let $c$ be the number of
  secants in the triple for $\gamma$ in the bottom side. If $b+c \ge 1$, such a
  secant with $\alpha$ and $\alpha'$ is a forbidden triple of secants belonging
  to $M$. If $b=c=0$, then the bottommost secant in the triple corresponding to
  $\delta$, the topmost secant in the triple corresponding to $\gamma$, and
  $\beta$ form a forbidden triple. Since by the previous paragraph $\beta$
  belongs to $M$, we obtain a contradiction in either case, and the claim is
  proved. By a similar argument one of $\delta'$ and $\gamma'$ must belong to
  $M$.
\end{proof}

Partition the set of secants $\{i,j\}$ into $n$ \emph{parallel families} according to the value
of $i+j$ modulo $n$. Geometrically this corresponds to collecting together
parallel secants and assigning loops appropriately (see Figure
\ref{fig:parallel families}). A set of three secants with the same value of
$i+j$ forms a forbidden triple, so an Msaft $M$ contains at most two secants in
the same parallel family. We will show there are exactly two.

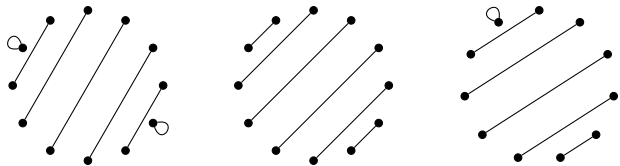
\begin{figure}[hbtp!]
\begin{center}
\begin{tikzpicture}
\tikzset{every node/.style={circle,fill,draw=black,minimum size = 0.1cm,inner sep=0cm}}
\begin{scope}
\foreach \i in {0,...,11}
\node (\i) at (90+30*\i:1cm) {};
\path[-,every loop/.style={min distance=0.3cm}]
 (2) edge [in=90+2*30-40,out=90+2*30+40,loop] ()
 (3) edge (1)
 (4) edge (0)
 (5) edge (11)
 (6) edge (10)
 (7) edge (9)
 (8) edge [in=90+8*30-40,out=90+8*30+40,loop] ();
\end{scope}
\begin{scope}[shift={(3cm,0cm)}]
\foreach \i in {0,...,11}
\node (\i) at (90+30*\i:1cm) {};
\path[-,every loop/.style={min distance=0.3cm}]
  (2) edge (1)
  (3) edge (0)
  (4) edge (11)
  (5) edge (10)
  (6) edge (9)
  (7) edge (8);
\end{scope}
\begin{scope}[shift={(6cm,0cm)}]
\foreach \i in {0,...,10}
\node (\i) at (90+360/11*\i:1cm) {};
\path[-,every loop/.style={min distance=0.3cm}]
 (1) edge [in=90+1*360/11-40,out=90+1*360/11+40,loop] ()
 (2) edge (0)
  (3) edge (10)
  (4) edge (9)
  (5) edge (8)
  (6) edge (7);
\end{scope}
\end{tikzpicture}
\end{center}
\caption{Examples of parallel families of secants $\{i,j\}$ with equal value of $i+j$
modulo $n$. When $n$ is even, such a family may have two loops (size $\frac{n+2}{2}$) or
zero loops (size $\frac{n}{2}$). When $n$ is odd, such a family has one loop and size
$\frac{n+1}{2}$.}
\label{fig:parallel families}
\end{figure}

Lemma \ref{lem:oneoftwo} may be interpreted as giving us relations between the
secants belonging to $M$ of two parallel families where the value of $i+j$ differs by 1. 
Say the secants $\beta = \{i,j+1\}$ and $\gamma = \{i+1,j\}$ (resp.~$\beta' =
\{i-1,j\}$ and $\gamma' = \{i,j-1\}$) are each \emph{to the right}
(resp.~\emph{to the left}) of $\alpha = \{i,j\}$. The
lemma states $M$ contains a secant to the right (resp.~left) of any
secant belonging to it. If two secants belong to $M$ and are in the same
parallel family, by the second claim of the lemma we may additionally assign the
secants to the right (resp.~left) so that they are distinct. Geometrically, a secant to
the right (resp.~left) is one turned the minimal amount counterclockwise (resp.~clockwise)
and which shares a vertex. The reason for the terminology will become more
clear in the next section.

Any Msaft $M$ contains two non-intersecting secants $\{i_1,j_1\}$ and
$\{i_2,j_2\}$ (for instance, any secant not in $M$ is contained in a forbidden
triple with two such secants). By possibly interchanging the secants and
cyclically relabelling the vertices, we may suppose $i_1\le j_1 < i_2\le j_2$
and $i_1 + j_1 +n \le i_2+j_2$. Apply Lemma \ref{lem:oneoftwo} to find a secant
belonging to $M$ $\{i_3,j_3\}$ which is $(i_2+j_2) - (i_1+j_1+n)$ times to the
right of $\{i_1,j_1\}$, and thus in the same parallel family as $\{i_2,j_2\}$. We cannot
have $ \{i_2,j_2\} = \{i_3,j_3\}$, as this would require $n$ more moves to the
right. Hence, for at least one parallel family, $M$ contains two of its secants. Applying
the lemma $n-1$ times to this parallel pair of secants as in the previous paragraph, we have proved
the following.

\begin{prop}\label{prop:Msaft2n}
  Fix $n\geq 3$. For each $k \in \{1,\ldots,n\}$, any Msaft $M$ contains
  exactly two secants of the form $\{i,j\}$ where $i+j = k$ modulo $n$. In
  particular, $M$ has cardinality $2n$. 
  Moreover, each secant belonging to $M$ for $k$ is to the left of a
  distinct secant belonging to $M$ for $k+1$.
\end{prop}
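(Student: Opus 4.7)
The plan is to turn the informal argument developed in the paragraphs preceding the statement into a structured two-step proof, using Lemma \ref{lem:oneoftwo} as the only substantive ingredient. The setup is the partition of all $\binom{n+1}{2}$ secants into the $n$ parallel families indexed by $i+j \bmod n$. Since any three secants in the same family automatically form a forbidden triple (they are pairwise non-crossing and nested, so the middle one separates the other two in the sense of Remark \ref{rem:middle}), each Msaft $M$ contains \emph{at most} two secants per family. The content of the proposition is that equality holds for every family simultaneously, and that successive families are matched by the ``to the left'' move.

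First, I would establish that at least one parallel family contains two secants of $M$. Maximality of $M$ forces the existence of two non-crossing secants $\{i_1,j_1\},\{i_2,j_2\}\in M$: any secant outside $M$ sits in a forbidden triple of secants of $M$, which by definition supplies such a pair. After cyclically relabelling the vertices and possibly swapping the two secants, arrange $i_1+j_1+n \le i_2+j_2$ and call the difference $d$. Applying the ``right'' half of Lemma \ref{lem:oneoftwo} to $\{i_1,j_1\}$ $d$ times produces a secant $\{i_3,j_3\}\in M$ in the same parallel family as $\{i_2,j_2\}$. Because carrying $\{i_1,j_1\}$ all the way back to itself would require exactly $n$ moves to the right, and we have performed strictly fewer, the secant $\{i_3,j_3\}$ must be distinct from $\{i_2,j_2\}$. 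Hence some parallel family contains two secants of $M$.

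Second, I would propagate this property around all $n$ parallel families. Given two distinct secants $\alpha,\alpha'\in M$ in the same family, say $\alpha'=\{i-1,j+1\}$ and $\alpha=\{i,j\}$, the second half of Lemma \ref{lem:oneoftwo} produces a secant $\delta$ or $\gamma$ to the right of $\alpha'$ in $M$ and, distinctly from it, a secant to the right of $\alpha$ in $M$; these two new secants lie in the next parallel family and are forced to be distinct by the lemma. Iterating this step $n-1$ times cycles through all families and supplies two distinct secants per family, giving $|M|\ge 2n$. Combined with the upper bound of two per family, both inequalities are equalities, so $|M|=2n$ and each family is hit exactly twice. Finally, the same distinctness built into the second claim of Lemma \ref{lem:oneoftwo} yields an injective assignment sending each secant of $M$ in family $k$ to a secant of $M$ in family $k+1$ to its right; equivalently, each secant of $M$ in family $k+1$ is to the right of a distinct secant of $M$ in family $k$, which is the ``to the left'' pairing asserted in the proposition.

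The main obstacle is the first step, locating two secants of $M$ in a single family; the rest is a propagation argument that rides entirely on Lemma \ref{lem:oneoftwo}. The delicate point there is ensuring that the iterated ``right'' moves yield a secant genuinely distinct from the target $\{i_2,j_2\}$, which is handled by the bookkeeping of parallel family indices modulo $n$ together with the fact that $d<n$ by construction.
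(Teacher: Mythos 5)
You follow the paper's own route step for step: the partition into parallel families, the observation that three members of one family form a forbidden triple (hence at most two per family), the location of one family meeting $M$ twice by iterating the ``right'' half of Lemma \ref{lem:oneoftwo} starting from a non-crossing pair, and the propagation around all $n$ families. However, two of your justifications do not prove the claims they are attached to. The first concerns exactly the point you yourself call delicate: why the secant $\{i_3,j_3\}$ obtained after $d=(i_2+j_2)-(i_1+j_1+n)$ right moves is distinct from $\{i_2,j_2\}$. The fact you invoke --- that carrying $\{i_1,j_1\}$ back to itself takes exactly $n$ right moves while you made fewer --- rules out $\{i_3,j_3\}=\{i_1,j_1\}$, not $\{i_3,j_3\}=\{i_2,j_2\}$, and yields no contradiction by itself. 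The count that is actually needed (and that the paper asserts) is: in any sequence of right moves from $\{i_1,j_1\}$, the two endpoints advance cyclically by totals $s,t\ge 0$ with $s+t$ equal to the number of moves, and landing on $\{i_2,j_2\}$ forces, under the normalization $i_1\le j_1<i_2\le j_2$, either $s+t\ge(i_2-i_1)+(j_2-j_1)$ or $s+t\ge(j_2-i_1)+(i_2-j_1)$, and both bounds equal $d+n$; since only $d$ moves were made, $\{i_3,j_3\}\neq\{i_2,j_2\}$.

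The second issue is in the propagation step, where you write ``say $\alpha'=\{i-1,j+1\}$'': this silently assumes that the two secants of $M$ in a given parallel family are adjacent within that family, which is the only configuration to which the second claim of Lemma \ref{lem:oneoftwo} applies. Non-adjacent pairs do occur: for $n=5$ there are Msafts containing a loop $\{v,v\}$ and the edge $\{v+2,v+3\}$ but not the intermediate parallel secant $\{v+1,v+4\}$ (see Figure \ref{AllMsafts}). The repair is easy and is what makes the paper's terse statement correct: if the two secants of the family are not adjacent, their sets of right neighbours are disjoint, so the first claim of the lemma already supplies distinct right neighbours in the next family; the second claim is needed precisely in the adjacent case, where the two neighbour sets share one element. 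With these two repairs your argument coincides with the paper's proof, including the resulting injective ``to the left'' assignment between consecutive families.
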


We know that $J$ is radical, as it is generated by square-free monomials, and
that the zero set is a union of coordinate linear subspaces corresponding to Msafts.
Thus, the equidimensionality of the variety $V(J)$ follows from
Proposition \ref{prop:Msaft2n}, and we have the following:

\begin{cor}\label{cor:equi}
The variety $V(J)\subset S^2\CC^n$ is $2n$ equidimensional. Its degree is equal
to the number of Msafts. 
\end{cor}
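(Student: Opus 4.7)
The proof is essentially a bookkeeping step that collects what has already been established. The plan is to feed Proposition \ref{prop:bijectionMSaft} and Proposition \ref{prop:Msaft2n} into the standard dictionary between square-free monomial ideals and unions of coordinate subspaces.

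First, I would recall that since $J$ is generated by square-free monomials (by Lemma \ref{lem:leading} and the construction of $J$), it is automatically a radical ideal. By Proposition \ref{prop:bijectionMSaft}, the variety $V(J)$ decomposes as a finite union of coordinate linear subspaces of $S^2\CC^n$, with one irreducible component for each Msaft $M$; the component corresponding to $M$ is spanned by the basis vectors indexed by the secants belonging to $M$, and therefore has dimension equal to $|M|$.

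Next, I would invoke Proposition \ref{prop:Msaft2n}, which asserts that every Msaft has cardinality exactly $2n$. Combining this with the previous step, every irreducible component of $V(J)$ has dimension $2n$, so $V(J)$ is equidimensional of dimension $2n$, proving the first claim.

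Finally, for the degree, I would use the fact that the degree of a reduced equidimensional union of linear subspaces of the same dimension is simply the number of such subspaces (each coordinate subspace has degree one). Since the irreducible components of $V(J)$ are in bijection with Msafts by Proposition \ref{prop:bijectionMSaft}, we conclude $\deg V(J)$ equals the number of Msafts, which completes the proof. There is no real obstacle here: all the work has been done in the preceding propositions, and the corollary is merely the translation of those combinatorial statements into the geometric language of dimension and degree.
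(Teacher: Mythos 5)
Your proposal is correct and follows essentially the same route as the paper: the paper also combines the radicality of $J$ (square-free generators), the decomposition of $V(J)$ into coordinate subspaces indexed by Msafts from Proposition \ref{prop:bijectionMSaft}, and the cardinality statement of Proposition \ref{prop:Msaft2n} to get equidimensionality of dimension $2n$ and degree equal to the number of Msafts. No gaps; this is exactly the intended bookkeeping argument.
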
 

\section{Counting Msafts and Proof of the theorem}\label{sec:countingMS}
%\section{Counting Msafts}\label{sec:countingMS}

\subsection{Counting Msafts}\label{subsec:countMS}

Since two different secants cannot each be to the right of two different
secants, in the conclusion of Proposition \ref{prop:Msaft2n} the association of
secants for $k$ to distinct secants to the right is unique. We can thus
associate to $M$ the function $f : M\to M$ taking a secant to the unique secant
to the right so that $f$ is injective.

More generally, let $M$ be any subset of secants containing exactly two from
each parallel family. A \emph{secant walk} is an injective function $f: M\to M$ where
$f(\alpha)$ is to the right of $\alpha$ for each $\alpha$. Of course, not every
such subset $M$ admits a walk; in fact, we have the following.

\begin{prop}\label{prop:walk}
  The underlying sets of secant walks are precisely the Msafts.
\end{prop}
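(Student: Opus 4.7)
The statement splits into two implications, both leveraging Proposition \ref{prop:Msaft2n}.

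For the forward direction, if $M$ is an Msaft, the proposition provides two secants of $M$ in each parallel family together with, for each $k$, a bijection from the secants of $M$ in family $k$ to those in family $k+1$ sending each $\alpha$ to a distinct secant to its right. Gluing these bijections across all $k$ yields the required injective function $f : M \to M$, so $M$ underlies a secant walk.

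For the backward direction, assume $M$ underlies a secant walk $f$, so by definition $|M|=2n$ with exactly two secants in each parallel family. I plan to verify the two defining properties of an Msaft separately. Maximality is the easier half: any three distinct secants in a common parallel family are pairwise non-crossing parallel chords of the $n$-gon, with the geometrically middle chord separating the other two onto opposite sides of itself, so by Remark \ref{rem:middle} such a triple is forbidden. Since $M$ already contains two secants in every family, adjoining any $s\notin M$ produces three secants in a common family and hence a forbidden triple; maximality therefore follows automatically from avoidance.

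The main obstacle is showing $M$ itself contains no forbidden triple. Supposing $\{\alpha_1,\alpha_2,\alpha_3\}\subset M$ is a forbidden triple with middle $\alpha_2$, my approach is to exploit that $f$, being an injection between equal-sized families of size two, is in fact a bijection on each parallel pair, so every $\alpha \in M$ has both a right- and a left-neighbor in $M$. Iterating $f$ or $f^{-1}$ starting from $\alpha_2$ rotates one of its endpoints by a single vertex at each step, sweeping through consecutive families of secants all of which must belong to $M$; after a bounded number of steps the rotated copy should either align with a parallel chord of $\alpha_1$ or of $\alpha_3$, or force a third distinct secant in a parallel family already occupied by two elements of $M$, contradicting the two-per-family bound. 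The delicate point --- and the hardest bookkeeping step --- is that at each iteration the walk has two candidate right-neighbors (by Lemma \ref{lem:oneoftwo}); I would look for a monovariant, such as the arc-distance between the rotated middle and $\alpha_1$, that strictly decreases under any valid choice of step, to force the collision.
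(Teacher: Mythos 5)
Your forward direction and your maximality argument are fine and agree with the paper: Proposition~\ref{prop:Msaft2n} supplies the injection $f$, and since three secants in one parallel family always form a forbidden triple, a set with two secants per family that avoids forbidden triples is automatically inclusion-maximal. The substance of the proposition, however, is the remaining claim that the underlying set $M$ of a secant walk contains no forbidden triple, and there your text is a strategy with an acknowledged missing step rather than a proof --- and as described it has a real hole. You propose to transport the \emph{middle} secant $\alpha_2$ by iterating $f$ or $f^{-1}$ until it reaches the parallel family of $\alpha_1$ (or $\alpha_3$), hoping to produce ``a third distinct secant'' in an occupied family. But every iterate $f^{\pm t}(\alpha_2)$ lies in $M$, and the family of $\alpha_1$ contains exactly two elements of $M$, one of them $\alpha_1$ itself; when your transported copy arrives there it may simply coincide with one of those two secants (for instance with $\alpha_1$), in which case no contradiction results. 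Nothing in your sketch excludes this, and the monovariant you hope for is not available in the form you suggest: $f$ is a fixed function, so you may choose the direction of travel but not which endpoint advances at each step. (Also note Lemma~\ref{lem:oneoftwo} is a statement about Msafts, i.e.\ about the conclusion you are trying to reach; for a walk the two-sided neighbours come simply from $f$ being a bijection of the finite set $M$.)

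The paper's proof runs in the opposite direction and needs no bookkeeping: it transports the two \emph{outer} secants $\alpha_1\in R_1^{\alpha_2}$ and $\alpha_3\in R_2^{\alpha_2}$ into the column (parallel family) of the middle secant $\alpha_2$, applying $f$ or $f^{-1}$ according to whether the column index must increase or decrease to reach that of $\alpha_2$ without wrapping. The key geometric fact --- the one missing from your sketch --- is that with this choice the transported secants stay in $R_1^{\alpha_2}$ and $R_2^{\alpha_2}$ respectively until the columns coincide: in the $n$-gon picture the endpoints move monotonically one vertex per step and cannot reach an endpoint of $\alpha_2$ before the column sums agree (equivalently, on the M\"obius diagram both the steps and the boundaries of $R_3^{\alpha_2}$ are diagonal, so whichever neighbour $f$ selects, the region cannot be exited prematurely). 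This yields three pairwise distinct elements of $M$ in a single parallel family --- $\alpha_2$ and the two transported copies, which are disjoint from $\alpha_2$ and lie on opposite sides of it --- contradicting the two-per-family property that defines a walk's underlying set. To salvage your variant you would need an analogous ``stays on one side of both $\alpha_1$ and $\alpha_3$, lands distinct from the occupants'' statement for the transported middle, which is precisely the part you left open; the paper's choice of what to move makes that issue disappear.
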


To prove Proposition \ref{prop:walk} and count Msafts, we introduce a more
convenient visual representation for sets admitting secant walks which allows
easier reasoning. Specifically, we represent each of the $\binom{n+1}{2}$
secants as a point, and we arrange them so that those in the same parallel family
are in the same column and so that a secant to the right of $\alpha$ is to the
top right or bottom right of $\alpha$ (which finally justifies our choice of
naming of this relation). Such an arrangement is only possible on a M\"obius
strip running left to right, so on the page we imagine the column one past the
end of the diagram is the first column again, but reversed top to bottom. See
Figure \ref{fig:msaft walks} for such diagrams corresponding to all Msafts for $n=4,5$.

\begin{figure}[hbtp!]
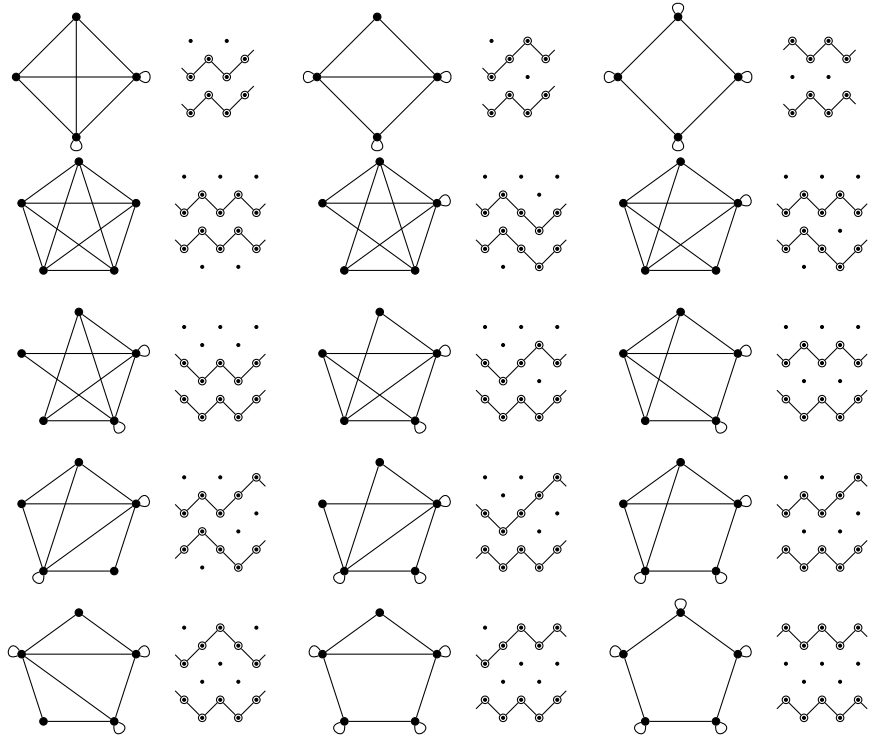

\begin{center}
\begin{tikzpicture}[scale=0.8]
\input{plot4edge.tex}
\end{tikzpicture}

\begin{tikzpicture}[scale=0.8]
\input{plot5edge.tex}
\end{tikzpicture}
\end{center}
\caption{Secant walks corresponding to Msafts for $n=4$ (top) and $n= 5$
  (bottom). The upper left lattice point corresponds to the loop at the
  northernmost vertex, thus its column corresponds to horizontal secants.
  Moving to the right columnwise corresponds to counterclockwise rotation of
secants.}
\label{fig:msaft walks}
\end{figure}

To establish our bijection of Msafts with secant walks, we need to understand
what forbidden triples look like in the diagrammatic representation. By Remark
\ref{rem:middle}, it suffices to understand for a given secant how the
non-intersecting secants are divided on each side of the $n$-gon. 
% Both 
\begin{defi}
Fix a secant $\alpha$ and identify it with a lattice point on a diagram, as presented in Figure \ref{fig:regions}. There is a distinguished, connected region on the M\"obius strip containing exactly those lattice points which correspond to secants intersecting $\alpha$ (possibly sharing just a vertex). We denote it by $R^\alpha_3$ and it is the white region in Figure \ref{fig:regions}. 

% questions are answered in Figure \ref{fig:regions}.

% questions are answered in Figure \ref{fig:regions}.

Additionally, there are at most two regions, each one containing points corresponding to secants that do not intersect $\alpha$ but lie on a fixed side of $\alpha$. We denote them by $R_1^\alpha$ and $R_2^\alpha$. If $\alpha$ is an edge or a loop one of these two regions is empty.  
\end{defi}
% questions are answered in Figure \ref{fig:regions}.

\begin{figure}[hbtp!]
\begin{center}
\begin{tikzpicture}[scale=0.3]
\input{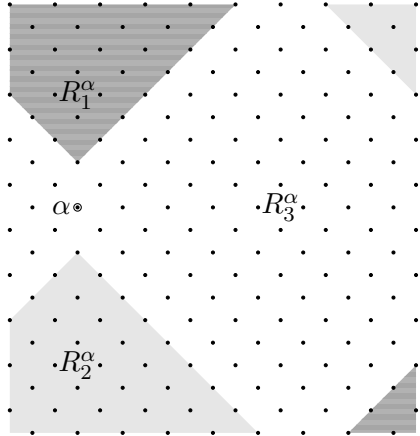}
\end{tikzpicture}
\end{center}
\caption{A secant $\alpha$ intersects all the secants in region $R_3^\alpha$.
  The remaining secants are divided into regions $R_1^\alpha$ and $R_2^\alpha$
  according to which side of $\alpha$ they are on in the $n$-gon. The regions
  wrap around the sides according to the M\"obius geometry of the strip.}
\label{fig:regions}
\end{figure}

\begin{proof}[Proof of Proposition \ref{prop:walk}]
We must show the underlying set of a secant walk is an Msaft. For contradiction
assume such a set contains a forbidden triple $\{\alpha,\beta,\gamma\}$, where
$\gamma$ is the middle element, say $\alpha \in R_1^\gamma$ and $\beta\in
R_2^\gamma$. Suppose $\gamma$ lies in column $k$. Applying $f$ or $f^{-1}$ to
$\alpha$ and $\beta$, we may move each to column $k$ while staying $R_1^\gamma$
and $R_2^\gamma$, respectively. In particular, we obtain three distinct
elements of $M$ in column $k$, contradicting our assumption. The claim is
proved.
\end{proof}

\begin{rem}
  A secant walk clearly cannot ``cross" itself. It follows from the M\"obius
  geometry that in a secant walk, each vertex is reachable by each other vertex
  by iterated applications of $f$.
\end{rem}

To count the number of secant walks we will need two results from graph theory,
the Lindstr{\"o}m-Gessel-Viennot lemma and the reflection principle.

Let $G$ be a finite directed acyclic graph and let $A=\{a_1,\dots,a_n\}$ and
$B=\{b_1,\dots,b_n\}$ be disjoint subsets of vertices of $G$. An \emph{$n$-tuple of
non-intersecting paths from $A$ to $B$} is a tuple $P=(P_1,\dots,P_n)$ of paths
in $G$ satisfying 
\begin{enumerate}
\item There is a permutation $\sigma(P) :=\sigma \in S_n$ such that $P_i$ is a
  path from $a_i$ to $b_{\sigma(i)}$.
  % (let us denote it by $P_i:a_i\rightarrow b_{\sigma(i)}$) and  
\item $P_i$ and $P_j$ have no common vertex whenever $i\neq j$. 
\end{enumerate}
Consider the following $n\times n$ matrix
\begin{equation}\label{nonintersectMatrix}
M(G:A,B):=\begin{pmatrix}
e(a_1,b_1)&\cdots&e(a_1,b_n)\\
\vdots&\ddots&\vdots\\
e(a_n,b_1)&\cdots&e(a_n,b_n)
\end{pmatrix},
\end{equation}
where $e(a,b):= \# \{ \text{paths $a\to b$ in $G$} \}$. In this situation,
the Lindstr{\"o}m-Gessel-Viennot lemma (\cite{GV, Lind}) in unweighted form
states $\det M(G:A,B)= \sum_{P} \operatorname{sign}(\sigma(P))$, where the sum is
taken over $n$-tuples of non-intersecting paths from $A$ to $B$. 

We wish to express the problem of counting secant walks as one of counting
non-intersecting paths on a graph. Distinguish a column $C$ in the M\"obius
strip, and form the graph $G$ with vertices the set of secants and edges
$\alpha\to \beta$ when $\beta$ is to the right of $\alpha$, except that a
secant in $C$ occurs as vertex twice, once with only its in-edges and once with
only its out-edges. Write $C\subset G$ as the copy with only out-edges and $C'$
for the copy with only in-edges. In effect, we take our usual M\"obius diagram,
make $C$ the first column, duplicate it as one past the $n$th column to obtain
$C'$ (flipping top and bottom), and take edges from each vertex to those north
east and south east from it when they exist. 

For a fixed subset $A$ of size $2$ of $C$, put $A'$ as this subset inside $C'$.
It is clear that secant walks intersecting $C$ in $A$ are in bijection with
non-intersecting pairs of paths from $A$ to $A'$ in $G$. Since paths from $A$
to $A'$ cannot cross, $\sigma = (1,2)$ is the only possible permutation which
can occur in the sum, so that by the Lindstr{\"o}m-Gessel-Viennot lemma, secant
walks intersecting $C$ in $A$ are counted by $-\det M(G,A,A')$, hence all secant
walks are counted by $\sum_A -\det M(G,A,A')$, with the sum taken over all size 2
subsets of $C$.
Thus, to count secant walks, it remains to compute the entries of $M(G,A,A')$,
that is, the number of paths from each element of $C$ to each element of $C'$.
To make our notation concrete, we now fix $C$ as a column containing a loop.
Such a column has $\lfloor \frac{n}{2}\rfloor + 1$ elements, which we label
with $\{0,\ldots,\lfloor \frac{n}{2}\rfloor\}$ in order starting with the loop,
and $C'$ correspondingly, see Figure \ref{fig:graph}.

\begin{figure}
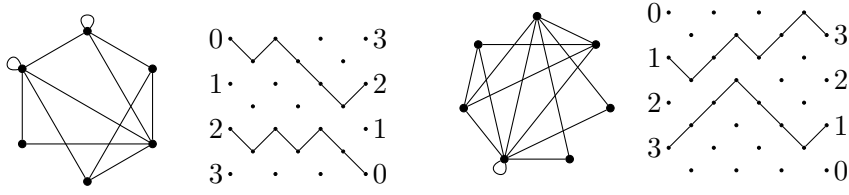

\begin{center}
\begin{tikzpicture}[scale=1]
\input{plot6one.tex}
\end{tikzpicture}
\qquad
\begin{tikzpicture}[scale=1]
\input{plot7one.tex}
\end{tikzpicture}
\end{center}
\caption{Examples of disjoint path pairs and the indexing of the first column
and its copy the last}
\label{fig:graph}
\end{figure}

Write $e(i,j)$ for the number of paths from $i\in C$ to $j\in C'$ in $G$. To
count these, we first model $G$ as a finite subset of a lattice, namely
\[
  G = \Gamma \cap \{ (x,y) \mid -n \le y \le 0 \},
\]
where
\[
  \Gamma = \{\ZZ (1,1) + \ZZ (1,-1)\} \cap \{ (x,y) \mid 0 \le x \le n \}.
\]
We equip the infinite vertical strip $\Gamma$ with the structure of a directed
graph by saying there is an edge from $u$ to $v$ if $v=u + (1,1)$ or $v = u+
(1,-1)$. The existing graph structure for $G$ is given the same way. We will
compute $e(i,j)$ by first computing $e'(i,j)$, the number of paths from $i$ to
$j$ \emph{through $\Gamma$}. Here $i, j \in \ZZ$, with $i$ corresponding to
lattice point $(0,-2i)$ and $j$ corresponding to lattice point $(n,-n+2j)$.

In what follows we set $\binom{n}{k}=0$ when $k<0$ or $n<k$. 
\begin{lemma}\label{lem:unbounded}
For $i,j \in \ZZ$, $e'(i,j)=\binom{n}{i+j}$.
\end{lemma}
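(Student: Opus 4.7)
The plan is to reduce the problem to a standard lattice-path count by encoding a path through $\Gamma$ as a sequence of steps. A path from $(0,-2i)$ to $(n,-n+2j)$ uses $n$ edges, each of the form $(1,1)$ or $(1,-1)$, since the $x$-coordinate must advance from $0$ to $n$ in unit increments. If such a path uses $k$ up-steps $(1,1)$ and $n-k$ down-steps $(1,-1)$, the net displacement is $(n, 2k-n)$.

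Matching the required displacement $(n,-n+2j)-(0,-2i)=(n,\,2(i+j)-n)$ forces $k=i+j$. Hence a path exists only when $0\le i+j\le n$, in which case every arrangement of $k=i+j$ up-steps among the $n$ total steps yields a valid path, and conversely every path is determined by such an arrangement. This gives exactly $\binom{n}{i+j}$ paths. When $i+j<0$ or $i+j>n$ no valid path exists, matching the convention $\binom{n}{k}=0$ outside the usual range.

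The only subtlety to verify is that the graph $\Gamma$ imposes no constraint beyond $x$-coordinate bounds: since every lattice point of the form $\ZZ(1,1)+\ZZ(1,-1)$ with $0\le x\le n$ is included, up- and down-steps can be taken in any order without leaving $\Gamma$ (there is no lower or upper bound on $y$ to respect). The horizontal strip bounds are respected automatically because the $x$-coordinate is monotonically increasing along the path and starts at $0$, ending at $n$. Thus no obstruction arises, and the count is genuinely $\binom{n}{i+j}$. I expect no real obstacle here; this is a direct bijection with binary strings of length $n$, and the content of the lemma is essentially the observation that $\Gamma$ is unbounded in the $y$-direction, so the reflection principle (needed later to enforce the $-n\le y\le 0$ constraint of $G$) has not yet come into play.
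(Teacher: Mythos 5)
Your argument is correct and is essentially identical to the paper's proof: both fix the number $k$ of up-steps, solve $-2i+2k-n=-n+2j$ to get $k=i+j$, and count placements of the up-steps among the $n$ steps, noting that $\Gamma$ imposes no vertical constraint. Nothing to add.
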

\begin{proof}
Starting at $(0,-2i)$, if we make $k$ steps in direction $(1,1)$ and hence
$n-k$ steps in direction $(1,-1)$ we end in position $(n,-2i+2k-n) =
(n,-n+2j)$, hence $k=i+j$. Paths through $\Gamma$ from $(0,-2i)$ to $(n,-n+2j)$
are in bijection with the number of ways to choose the $k$ positions of the
$(1,1)$ steps from $n$, and the lemma follows.
\end{proof}
% Clearly not all lattice paths on the grid $\Gamma$ represent lattice paths within $G$, as the latter is bounded from top and bottom. We need to count how many lattice paths going from $i$ to $j$ (where we assume the start and endpoint are in $V(G)$, i.e.~$0\leq i,j\leq \lfloor\frac{n}{2}\rfloor$) leave the bounds.
\begin{lemma}\label{lem:how_many_intersecting}
  For $0\le i,j \le \lfloor \frac{n}{2}\rfloor$, $e(i,j) = \binom{n}{i+j}-\binom{n}{j-i-1}-\binom{n}{i-j-1}$. 
\end{lemma}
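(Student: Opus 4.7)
The plan is to extract $e(i,j)$ from the unrestricted count $e'(i,j)$ of Lemma \ref{lem:unbounded} by an inclusion--exclusion via the reflection principle applied to the two height constraints $y\le 0$ and $y\ge -n$ that cut $G$ out of $\Gamma$. Concretely, every path in $\Gamma$ from $(0,-2i)$ to $(n,-n+2j)$ either stays inside $G$ (and is counted by $e(i,j)$), or visits $y=1$, or visits $y=-n-1$. Writing $B_+$, $B_-$, $B_{\pm}$ for the numbers of paths visiting $y=1$, $y=-n-1$, and both, we will show
\[
e(i,j) \;=\; e'(i,j) \;-\; B_+ \;-\; B_- \;+\; B_{\pm}.
\]

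The first easy observation is that $B_{\pm}=0$: a path of length $n$ with vertical steps $\pm 1$ can realize a vertical displacement of at most $n$, but the two lines $y=1$ and $y=-n-1$ are at distance $n+2$. Hence the inclusion--exclusion collapses to subtracting $B_+$ and $B_-$.

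For $B_+$, I would apply the reflection principle across $y=1$: reflecting the start $(0,-2i)$ gives $(0,2i+2)$, and paths in $\Gamma$ from $(0,-2i)$ to $(n,-n+2j)$ meeting $y=1$ are in bijection with \emph{all} paths in $\Gamma$ from $(0,2i+2)$ to $(n,-n+2j)$. The computation in Lemma \ref{lem:unbounded} (solve $2k-n=-n+2j-(2i+2)$ for the number $k$ of $(1,1)$-steps) gives $k=j-i-1$ and so $B_+=\binom{n}{j-i-1}$. Likewise, reflecting $(0,-2i)$ across $y=-n-1$ yields $(0,2i-2n-2)$; the analogous count gives $k=n+1+j-i$ up-steps, hence $B_-=\binom{n}{n+1+j-i}=\binom{n}{i-j-1}$ by the symmetry $\binom{n}{k}=\binom{n}{n-k}$. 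Assembling the three pieces produces the claimed identity
\[
e(i,j)=\binom{n}{i+j}-\binom{n}{j-i-1}-\binom{n}{i-j-1}.
\]

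The essential content is therefore already present once Lemma \ref{lem:unbounded} is in hand; the only genuinely delicate point is the verification that no path can simultaneously hit both forbidden lines, which is exactly where the width $n$ of the strip matches the length of the path and makes the reflection principle apply without a higher-order correction. The hypothesis $0\le i,j\le\lfloor n/2\rfloor$ ensures that $(0,-2i)$ indeed sits inside $G$ so that the reflections are performed at the correct boundaries; beyond that the argument is pure bookkeeping with binomial coefficients.
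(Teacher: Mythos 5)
Your proof is correct and follows essentially the same route as the paper: decompose the unrestricted paths of Lemma \ref{lem:unbounded} according to whether they touch $y=1$ or $y=-n-1$ (noting a length-$n$ path cannot touch both), and remove each bad class via the reflection principle. The only cosmetic difference is that you reflect the initial segment (moving the start point), while the paper reflects the tail after the first touch (moving the endpoint); both give the same binomial counts.
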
 
\begin{proof}
Let $P^\Gamma(i,j)$ and $P^G(i,j)$ be the set of paths from $i$ to $j$
through $\Gamma$ and $G$, respectively, and let $P^{\text{top}}(i,j)$ and
$P^{\text{bottom}}(i,j)$ be the set of paths through $\Gamma$ from $i$ to
$j$ which touch the lines $y=1$ and $y=-n-1$, respectively.

First we note that a path through $\Gamma$ has $n$ steps, so it cannot touch
both the lines $y=1$ and $y=-n-1$. Hence we have the disjoint union
\[
  P^\Gamma(i,j) = P^G(i,j) \cup P^{\text{top}}(i,j) \cup P^{\text{bottom}}(i,j).
\]
We form a bijection between $P^{\text{bottom}}(i,j)$ and $P^\Gamma(i, -j-1)$
and between $P^{\text{top}}(i,j)$ and $P^\Gamma(-i-1,j)$ using the
\emph{reflection principle}.

Consider a lattice path $p \in P^{\text{bottom}}(i,j)$. Let $k$ be the first
step $p$ touches the line $y=-n-1$. Let $p'$ be $p$, but with all steps
reversed starting from step $k+1$ to the end. Geometrically, this has the effect of
reflecting the path corresponding to $p$ accross the line $y=-n-1$ in this
region. In particular $p'$ ends at $-j-1$. Applying the same rule to $p'$ gives
us $p$ again, and we see this assignment is bijective.
\begin{center}
\begin{tikzpicture}[scale=0.55]
\tikzset{every node/.style={circle,fill,draw=black,inner sep=0cm}}
\node[minimum size=0.1cm] (-2-2) at (0.000000cm,2.000000cm) {};
\node[minimum size=0.1cm] (-1-1) at (0.000000cm,1.000000cm) {};
\node[minimum size=0.1cm] (-1-2) at (0.500000cm,1.500000cm) {};
\node[minimum size=0.1cm] (-1-3) at (1.000000cm,2.000000cm) {};
\node[minimum size=0.1cm] (0-0) at (0.000000cm,0.000000cm) {};
\node[minimum size=0.1cm] (0-1) at (0.500000cm,0.500000cm) {};
\node[minimum size=0.1cm] (0-2) at (1.000000cm,1.000000cm) {};
\node[minimum size=0.1cm] (0-3) at (1.500000cm,1.500000cm) {};
\node[minimum size=0.1cm] (0-4) at (2.000000cm,2.000000cm) {};
\node[minimum size=0.04cm] (1--1) at (0.000000cm,-1.000000cm) {};
\node[minimum size=0.04cm] (1-0) at (0.500000cm,-0.500000cm) {};
\node[minimum size=0.1cm] (1-1) at (1.000000cm,0.000000cm) {};
\node[minimum size=0.1cm] (1-2) at (1.500000cm,0.500000cm) {};
\node[minimum size=0.1cm] (1-3) at (2.000000cm,1.000000cm) {};
\node[minimum size=0.1cm] (1-4) at (2.500000cm,1.500000cm) {};
\node[minimum size=0.1cm] (1-5) at (3.000000cm,2.000000cm) {};
\node[minimum size=0.04cm] (2--2) at (0.000000cm,-2.000000cm) {};
\node[minimum size=0.04cm] (2--1) at (0.500000cm,-1.500000cm) {};
\node[minimum size=0.04cm] (2-0) at (1.000000cm,-1.000000cm) {};
\node[minimum size=0.04cm] (2-1) at (1.500000cm,-0.500000cm) {};
\node[minimum size=0.1cm] (2-2) at (2.000000cm,0.000000cm) {};
\node[minimum size=0.1cm] (2-3) at (2.500000cm,0.500000cm) {};
\node[minimum size=0.1cm] (2-4) at (3.000000cm,1.000000cm) {};
\node[minimum size=0.1cm] (2-5) at (3.500000cm,1.500000cm) {};
\node[minimum size=0.1cm] (2-6) at (4.000000cm,2.000000cm) {};
\node[minimum size=0.04cm] (3--3) at (0.000000cm,-3.000000cm) {};
\node[minimum size=0.04cm] (3--2) at (0.500000cm,-2.500000cm) {};
\node[minimum size=0.04cm] (3--1) at (1.000000cm,-2.000000cm) {};
\node[minimum size=0.04cm] (3-0) at (1.500000cm,-1.500000cm) {};
\node[minimum size=0.04cm] (3-1) at (2.000000cm,-1.000000cm) {};
\node[minimum size=0.04cm] (3-2) at (2.500000cm,-0.500000cm) {};
\node[minimum size=0.1cm] (3-3) at (3.000000cm,0.000000cm) {};
\node[minimum size=0.1cm] (3-4) at (3.500000cm,0.500000cm) {};
\node[minimum size=0.1cm] (3-5) at (4.000000cm,1.000000cm) {};
\node[minimum size=0.1cm] (3-6) at (4.500000cm,1.500000cm) {};
\node[minimum size=0.1cm] (3-7) at (5.000000cm,2.000000cm) {};
\node[minimum size=0.04cm] (4--2) at (1.000000cm,-3.000000cm) {};
\node[minimum size=0.04cm] (4--1) at (1.500000cm,-2.500000cm) {};
\node[minimum size=0.04cm] (4-0) at (2.000000cm,-2.000000cm) {};
\node[minimum size=0.04cm] (4-1) at (2.500000cm,-1.500000cm) {};
\node[minimum size=0.04cm] (4-2) at (3.000000cm,-1.000000cm) {};
\node[minimum size=0.04cm] (4-3) at (3.500000cm,-0.500000cm) {};
\node[minimum size=0.1cm] (4-4) at (4.000000cm,0.000000cm) {};
\node[minimum size=0.1cm] (4-5) at (4.500000cm,0.500000cm) {};
\node[minimum size=0.1cm,label=above:$p$] (4-6) at (5.000000cm,1.000000cm) {};
\node[minimum size=0.1cm] (4-7) at (5.500000cm,1.500000cm) {};
\node[minimum size=0.1cm] (4-8) at (6.000000cm,2.000000cm) {};
\node[minimum size=0.04cm] (5--1) at (2.000000cm,-3.000000cm) {};
\node[minimum size=0.04cm] (5-0) at (2.500000cm,-2.500000cm) {};
\node[minimum size=0.04cm] (5-1) at (3.000000cm,-2.000000cm) {};
\node[minimum size=0.04cm] (5-2) at (3.500000cm,-1.500000cm) {};
\node[minimum size=0.04cm] (5-3) at (4.000000cm,-1.000000cm) {};
\node[minimum size=0.04cm] (5-4) at (4.500000cm,-0.500000cm) {};
\node[minimum size=0.1cm] (5-5) at (5.000000cm,0.000000cm) {};
\node[minimum size=0.1cm] (5-6) at (5.500000cm,0.500000cm) {};
\node[minimum size=0.1cm] (5-7) at (6.000000cm,1.000000cm) {};
\node[minimum size=0.1cm] (5-8) at (6.500000cm,1.500000cm) {};
\node[minimum size=0.1cm] (5-9) at (7.000000cm,2.000000cm) {};
\node[minimum size=0.04cm] (6-0) at (3.000000cm,-3.000000cm) {};
\node[minimum size=0.04cm] (6-1) at (3.500000cm,-2.500000cm) {};
\node[minimum size=0.04cm] (6-2) at (4.000000cm,-2.000000cm) {};
\node[minimum size=0.04cm] (6-3) at (4.500000cm,-1.500000cm) {};
\node[minimum size=0.04cm] (6-4) at (5.000000cm,-1.000000cm) {};
\node[minimum size=0.04cm] (6-5) at (5.500000cm,-0.500000cm) {};
\node[minimum size=0.1cm] (6-6) at (6.000000cm,0.000000cm) {};
\node[minimum size=0.1cm] (6-7) at (6.500000cm,0.500000cm) {};
\node[minimum size=0.1cm] (6-8) at (7.000000cm,1.000000cm) {};
\node[minimum size=0.1cm] (6-9) at (7.500000cm,1.500000cm) {};
\node[minimum size=0.1cm] (6-10) at (8.000000cm,2.000000cm) {};
\node[minimum size=0.04cm] (7-1) at (4.000000cm,-3.000000cm) {};
\node[minimum size=0.04cm] (7-2) at (4.500000cm,-2.500000cm) {};
\node[minimum size=0.04cm,label=below:$p'$] (7-3) at (5.000000cm,-2.000000cm) {};
\node[minimum size=0.04cm] (7-4) at (5.500000cm,-1.500000cm) {};
\node[minimum size=0.04cm] (7-5) at (6.000000cm,-1.000000cm) {};
\node[minimum size=0.04cm] (7-6) at (6.500000cm,-0.500000cm) {};
\node[minimum size=0.1cm] (7-7) at (7.000000cm,0.000000cm) {};
\node[minimum size=0.1cm] (7-8) at (7.500000cm,0.500000cm) {};
\node[minimum size=0.1cm] (7-9) at (8.000000cm,1.000000cm) {};
\node[minimum size=0.1cm] (7-10) at (8.500000cm,1.500000cm) {};
\node[minimum size=0.1cm,label=right:2] (7-11) at (9.000000cm,2.000000cm) {};
\node[minimum size=0.04cm] (8-2) at (5.000000cm,-3.000000cm) {};
\node[minimum size=0.04cm] (8-3) at (5.500000cm,-2.500000cm) {};
\node[minimum size=0.04cm] (8-4) at (6.000000cm,-2.000000cm) {};
\node[minimum size=0.04cm] (8-5) at (6.500000cm,-1.500000cm) {};
\node[minimum size=0.04cm] (8-6) at (7.000000cm,-1.000000cm) {};
\node[minimum size=0.04cm] (8-7) at (7.500000cm,-0.500000cm) {};
\node[minimum size=0.1cm] (8-8) at (8.000000cm,0.000000cm) {};
\node[minimum size=0.1cm] (8-9) at (8.500000cm,0.500000cm) {};
\node[minimum size=0.1cm,label=right:1] (8-10) at (9.000000cm,1.000000cm) {};
\node[minimum size=0.04cm] (9-3) at (6.000000cm,-3.000000cm) {};
\node[minimum size=0.04cm] (9-4) at (6.500000cm,-2.500000cm) {};
\node[minimum size=0.04cm] (9-5) at (7.000000cm,-2.000000cm) {};
\node[minimum size=0.04cm] (9-6) at (7.500000cm,-1.500000cm) {};
\node[minimum size=0.04cm] (9-7) at (8.000000cm,-1.000000cm) {};
\node[minimum size=0.04cm] (9-8) at (8.500000cm,-0.500000cm) {};
\node[minimum size=0.1cm,label=right:0] (9-9) at (9.000000cm,0.000000cm) {};
\node[minimum size=0.04cm] (10-4) at (7.000000cm,-3.000000cm) {};
\node[minimum size=0.04cm] (10-5) at (7.500000cm,-2.500000cm) {};
\node[minimum size=0.04cm] (10-6) at (8.000000cm,-2.000000cm) {};
\node[minimum size=0.04cm] (10-7) at (8.500000cm,-1.500000cm) {};
\node[minimum size=0.04cm,label=right:-1] (10-8) at (9.000000cm,-1.000000cm) {};
\node[minimum size=0.04cm] (11-5) at (8.000000cm,-3.000000cm) {};
\node[minimum size=0.04cm] (11-6) at (8.500000cm,-2.500000cm) {};
\node[minimum size=0.04cm,label=right:-2] (11-7) at (9.000000cm,-2.000000cm) {};
\node[minimum size=0.04cm,label=right:-3] (12-6) at (9.000000cm,-3.000000cm) {};
\path[-]
 (-1-1) edge [thick] (0-1)
 (0-1) edge [thick] (1-1)
 (1-1) edge [thick] (1-2)
 (1-2) edge [thick] (1-3)
 (1-3) edge [thick] (2-3)
 (2-3) edge [thick] (3-3)
 (3-3) edge [thick] (4-3)
 (4-3) edge [thick] (4-4)
 (4-4) edge [thick] (4-5)
 (4-5) edge [thick] (4-6)
 (4-6) edge [thick] (5-6)
 (5-6) edge [thick] (5-7)
 (5-7) edge [thick] (5-8)
 (5-8) edge [thick] (6-8)
 (6-8) edge [thick] (7-8)
 (7-8) edge [thick] (7-9)
 (7-9) edge [thick] (8-9)
 (8-9) edge [thick] (8-10)
 (4-3) edge [thick,dotted] (5-3)
 (5-3) edge [thick,dotted] (6-3)
 (6-3) edge [thick,dotted] (7-3)
 (7-3) edge [thick,dotted] (7-4)
 (7-4) edge [thick,dotted] (8-4)
 (8-4) edge [thick,dotted] (9-4)
 (9-4) edge [thick,dotted] (9-5)
 (9-5) edge [thick,dotted] (9-6)
 (9-6) edge [thick,dotted] (10-6)
 (10-6) edge [thick,dotted] (10-7)
 (10-7) edge [thick,dotted] (11-7) ;

\end{tikzpicture}
\end{center}
Similarly, we reverse the steps of a lattice path $p\in P^{\text{top}}(i,j)$
from the first step up to and including the first step $p$ touches the line
$y=1$ to obtain a path $p'\in P^{\Gamma}(-i-1,j)$, and this map is a bijection.
Applying Lemma \ref{lem:unbounded} completes the proof.
\end{proof}

% I thought more about your idea and it is possible to count path pairs using the Lindstr\"om-Gessel-Viennot lemma. This just requires us to count paths from a particular start to a particular finish, which can be done with the reflection principle http://cgm.cs.mcgill.ca/~breed/MATH671/lecture2corrected.pdf

%It says that walks of our kind from a to b which touch some barrier are in bijective correspondence with walks from a' to b, where a' is reflected across the barrier. We have two barriers, but it is impossible for a walk to touch both, so the counting is easy, just a binomial coefficient less two binomial coefficients:

%npath(i,j,n) = binomial(n,i+j) - binomial(n, j-1-i) - binomial(n,j+n+1-i)

%Here i and j range from 0..floor(n/2) and index a column starting at a loop. For instance, in the diagrams, i starts at zero and indexes increasing downward in the first column, and j starts at 0 and indexes increasing upward in one past the last column. 

%Then using the non-intersecting path counting lemma, we obtain
%npair(i,j,n) = npath(i,j,n)*npath(j,i,n) - npath(i,i,n) *npath(j,j,n)

%where here i,j and n are in the previous email and this computes the counts there. 

%The final step is to add up over all pairs i and j and prove it is the degree we want.

\begin{prop}\label{prop:how_many_Msaft}
For any $n\geq 4$ there are
\[\frac{1}{2}\sum_{i=0}^{\lfloor \frac{n}{2}\rfloor}\sum_{j=0}^{\lfloor \frac{n}{2}\rfloor}\left(\binom{n}{i+j}-\binom{n}{i-j-1}-\binom{n}{j-i-1}\right)^2-\binom{n}{2i}\binom{n}{2j}\]
Msafts.
\end{prop}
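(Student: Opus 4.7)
The plan is to combine Proposition \ref{prop:walk} with the Lindstr\"om--Gessel--Viennot (LGV) lemma and the path count of Lemma \ref{lem:how_many_intersecting}. First, I fix a column $C$ of size $\lfloor n/2\rfloor + 1$ containing a loop, with elements labeled $0, 1, \ldots, \lfloor n/2 \rfloor$ as in the paragraph before Lemma \ref{lem:unbounded}. By Proposition \ref{prop:walk}, Msafts are in bijection with secant walks, and by Proposition \ref{prop:Msaft2n} each such walk meets $C$ in a two-element subset $A$. Using the bijection recalled in \S\ref{subsec:countMS}, secant walks with $M \cap C = A$ correspond to non-intersecting pairs of paths from $A$ to $A'$ in the directed graph $G$. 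Hence the number of Msafts decomposes as $\sum_A N(A)$ over two-element subsets $A \subseteq C$, where $N(A)$ denotes the number of such path pairs.

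Next, I apply LGV to each $N(A)$. The crucial observation is that the M\"obius layout reverses the top-to-bottom order between $C$ and $C'$, so any non-crossing pair of paths from $A = \{i,j\}$ to $A'$ is forced into a unique matching of endpoints; equivalently, only one permutation $\sigma \in S_2$ admits non-intersecting path tuples. Since LGV gives $\det M(G, A, A') = \sum_{P} \operatorname{sign}(\sigma(P))$ over non-intersecting tuples, choosing the labeling of $A'$ so that this $\sigma$ is the identity yields $N(\{i,j\}) = e(i,j)\,e(j,i) - e(i,i)\,e(j,j) = e(i,j)^2 - e(i,i)\,e(j,j)$, where I have used the symmetry $e(i,j) = e(j,i)$ apparent from Lemma \ref{lem:how_many_intersecting}.

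Summing over two-element subsets and symmetrizing, the diagonal terms $i = j$ contribute zero and the off-diagonal summand is symmetric, so
\begin{equation*}
\#\{\text{Msafts}\} \;=\; \sum_{0 \le i < j \le \lfloor n/2 \rfloor}\!\!\!\left( e(i,j)^2 - e(i,i)\, e(j,j) \right) \;=\; \frac{1}{2}\sum_{i=0}^{\lfloor n/2 \rfloor} \sum_{j=0}^{\lfloor n/2 \rfloor}\left( e(i,j)^2 - e(i,i)\, e(j,j) \right).
\end{equation*}
Finally, substituting the closed forms $e(i,j) = \binom{n}{i+j} - \binom{n}{i-j-1} - \binom{n}{j-i-1}$ and $e(i,i) = \binom{n}{2i}$ (the latter using the convention $\binom{n}{-1} = 0$) from Lemma \ref{lem:how_many_intersecting} produces the claimed formula.

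The step requiring the most care is justifying that only one permutation $\sigma \in S_2$ admits non-intersecting path pairs; this is precisely what the M\"obius orientation of the diagram enforces, and it is what converts the LGV determinant directly into a count. Once this is in place, the remainder is direct bookkeeping between the LGV determinant and the stated sum of binomials.
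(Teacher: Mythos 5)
Your proposal is correct and follows essentially the same route as the paper: decompose Msafts (via the secant-walk bijection of Proposition \ref{prop:walk}) according to their intersection with the distinguished column $C$, apply the Lindstr\"om--Gessel--Viennot lemma using the fact that the M\"obius flip forces a unique endpoint matching, obtain $e(i,j)^2-e(i,i)e(j,j)$ for each pair, symmetrize to half the full double sum, and substitute Lemma \ref{lem:how_many_intersecting}. The only cosmetic difference is that you relabel $A'$ to make the permutation the identity, whereas the paper keeps the labels and records the count as $-\det M(G,A,A')$; the resulting expression is identical.
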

\begin{proof}
  By the discussion before Lemma \ref{lem:unbounded}, from the
  Lindstr{\"o}m-Gessel-Viennot Lemma, we have that the number of secant walks
  are counted by $\sum_A -\det M(G,A,A')$, with the sum being taken over all
  size 2 subsets of $C$. We have indexed elements of $C$ by $\{0,\ldots,\lfloor
  \frac{n}{2}\rfloor\}$, so this sum becomes
  \[
    \sum_{0\le i < j \le \lfloor \frac{n}{2} \rfloor} -\det \begin{bmatrix} e(i,i) & e(i,j) \\ e(j,i) & e(j,j) \end{bmatrix}.
  \]
  The summand vanishes when $i = j$ and is invariant under interchanging $i$
  and $j$, so we may replace the sum with half the sum where $i$ and
  $j$ each range over $\{0,\ldots,\lfloor \frac{n}{2} \rfloor\}$. Applying
  Lemma \ref{lem:how_many_intersecting} we obtain the sum in the claim, which
  is the number of Msafts by Proposition \ref{prop:walk}.
\end{proof}

\subsection{Degree and some binomial identities}\label{sec:binoms}
%\subsection{Degree and binomial coefficients}\label{sec:binoms}

\begin{prop}\label{prop:binomialeq}
For any positive integer $n$ we have:
\begin{multline*}
\sum_{i=0}^{\lfloor \frac{n}{2}\rfloor}\sum_{j=0}^{\lfloor \frac{n}{2}\rfloor}\left(\binom{n}{i+j}-\binom{n}{i-j-1}-\binom{n}{j-i-1}\right)^2-\binom{n}{2i}\binom{n}{2j}\\
{}= \frac{n+2}{2}\binom{2n}{n}-3\cdot 2^{2n-2}.
\end{multline*}
\end{prop}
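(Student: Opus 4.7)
The plan is to expand the squared expression on the left-hand side and evaluate the resulting binomial sums using Vandermonde's convolution and the symmetry $\binom{n}{k} = \binom{n}{n-k}$. Let $m = \lfloor n/2 \rfloor$ and split the left-hand side as $S - T$, where $T = \sum_{i,j=0}^{m}\binom{n}{2i}\binom{n}{2j}$ and $S = \sum_{i,j=0}^{m} e(i,j)^2$ with $e(i,j) = \binom{n}{i+j} - \binom{n}{i-j-1} - \binom{n}{j-i-1}$. The term $T = (\sum_{k \text{ even}}\binom{n}{k})^2 = (2^{n-1})^2 = 2^{2n-2}$ is immediate. For $S$, expanding the square relies on two simple observations: (i) $\binom{n}{i-j-1}\binom{n}{j-i-1}$ vanishes identically because its two factors have disjoint supports (one requires $i>j$, the other $j>i$), and (ii) the $i \leftrightarrow j$ symmetry of the summation range collapses the off-diagonal terms in pairs. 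This reduces $S$ to $S_1 + 2 S_2 - 4 S_3$ where
\[
S_1 = \sum_{i,j=0}^{m}\binom{n}{i+j}^2,\quad S_2 = \sum_{i,j=0}^{m}\binom{n}{i-j-1}^2,\quad S_3 = \sum_{i,j=0}^{m}\binom{n}{i+j}\binom{n}{i-j-1}.
\]

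The key identity underlying the main computation is
\[
\sum_{k=0}^{n}(k+1)\binom{n}{k}^2 = \frac{n+2}{2}\binom{2n}{n},
\]
which follows from $k\binom{n}{k} = n\binom{n-1}{k-1}$, Vandermonde's convolution, and $\binom{2n-1}{n-1} = \tfrac{1}{2}\binom{2n}{n}$. To evaluate $S_1 + 2 S_2$, I would count the coefficient of $\binom{n}{k}^2$: $S_1$ contributes $\#\{(i,j) \in [0,m]^2 : i+j = k\}$ while $2S_2$ contributes $2\#\{(i,j) \in [0,m]^2 : i-j-1 = k\}$, and these two weights combine uniformly in $k$ to $n+1-k$ (for $n$ even) or $n-k$ (for $n$ odd). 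Applying the key identity together with the substitution $k \leftrightarrow n-k$ then yields $S_1 + 2 S_2 = \tfrac{n+2}{2}\binom{2n}{n}$ for $n$ even and $\tfrac{n}{2}\binom{2n}{n}$ for $n$ odd.

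To evaluate $S_3$, substitute $a = i+j$ and $b = i-j-1$ so that $a+b = 2i-1$ is odd with $a > b$. Vandermonde gives $\sum_{a+b=2i-1}\binom{n}{a}\binom{n}{b} = \binom{2n}{2i-1}$, and the $a \leftrightarrow b$ symmetry (with no $a = b$ term since $a+b$ is odd) forces $S_3 = \tfrac{1}{2}\sum_{i=1}^{m}\binom{2n}{2i-1}$. Using $\sum_{k \text{ odd}}\binom{2n}{k} = 2^{2n-1}$ and its symmetry around the central term $\binom{2n}{n}$, one obtains $4 S_3 = 2^{2n-1}$ for $n$ even and $4 S_3 = 2^{2n-1} - \binom{2n}{n}$ for $n$ odd. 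Combining these gives $S = \tfrac{n+2}{2}\binom{2n}{n} - 2^{2n-1}$ in both parities, and subtracting $T = 2^{2n-2}$ yields the desired $\tfrac{n+2}{2}\binom{2n}{n} - 3 \cdot 2^{2n-2}$. The main obstacle will be the parity-dependent bookkeeping in $S_1 + 2 S_2$ and $4 S_3$, but the $\binom{2n}{n}$-discrepancies conspire to cancel, producing the stated uniform expression.
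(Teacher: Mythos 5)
Your plan is correct: every asserted intermediate value checks out (the weight of $\binom{n}{k}^2$ in $S_1+2S_2$ is indeed $2\lfloor n/2\rfloor+1-k$ uniformly in $k$, giving $\tfrac{n+2}{2}\binom{2n}{n}$ for $n$ even and $\tfrac{n}{2}\binom{2n}{n}$ for $n$ odd, and your values $4S_3=2^{2n-1}$ resp.\ $2^{2n-1}-\binom{2n}{n}$ and $T=2^{2n-2}$ are exactly right), so the parity discrepancies cancel as you predict. Structurally your decomposition coincides with the paper's: the paper also uses the disjoint-support observation (via its function $h(i,j)$) to split the left side into the square terms (its Lemma on $\binom{n}{i+j}^2+\binom{n}{h(i,j)}^2$, which equals your $S_1+2S_2$), the cross term (its ``mieszane'' sum, which is your $2S_3$), and the even-binomial product (your $T$). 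The differences are in how the pieces are evaluated. For the squares, the paper counts the multiplicity of $\binom{n}{a}^2$ together with $\binom{n}{n-a}^2$ to get a constant count and then applies $\sum_k\binom{n}{k}^2=\binom{2n}{n}$, whereas you keep the linear weight and use $\sum_k(k+1)\binom{n}{k}^2=\tfrac{n+2}{2}\binom{2n}{n}$; these are close in spirit. The genuine divergence is the cross term: the paper evaluates it by a fairly intricate argument arranging $\{-1,\dots,n\}$ on a circle, pairing entries of complementary consecutive blocks, and a doubling/reflection count that is noticeably more delicate in the odd case, while you fix the anti-diagonal $a+b=2i-1$, apply Vandermonde to get $\binom{2n}{2i-1}$, halve by the $a\leftrightarrow b$ symmetry (no diagonal term since $a+b$ is odd), and finish with the odd-index half of $\sum_k\binom{2n}{k}=2^{2n}$. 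Your route for this lemma is more direct and arguably simpler than the paper's, at the cost of nothing; the paper's circular argument buys no extra generality here.
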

Before we prove the proposition we will need a few simple lemmas. We start with the following that is well-known.
\begin{lemma}\label{lem:easy}
\begin{enumerate}
  \item\label{item:1} $\sum_{i=0}^n \binom{n}{i}=2^n$,
\item\label{item:2} $\sum_{i=0}^n \binom{n}{i}^2=\binom{2n}{n}$,
\item\label{item:3} $\sum_{i=0, i\text{ even}}^n \binom{n}{i}=\sum_{i=1, i\text{ odd}}^n \binom{n}{i}=2^{n-1}$.
\end{enumerate}
\end{lemma}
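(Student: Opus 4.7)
The statement collects three classical identities, each immediate from the binomial theorem or a standard combinatorial argument; the ``proof'' is essentially a short note recalling them. My plan is to dispatch each part independently, using the minimum of machinery.

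For part \eqref{item:1}, I will simply specialize the binomial theorem $(x+y)^n = \sum_{i=0}^n \binom{n}{i} x^i y^{n-i}$ at $x=y=1$, which gives $2^n$ on the left and the desired sum on the right.

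For part \eqref{item:2}, the quickest route is Vandermonde's convolution: expanding $(1+t)^n(1+t)^n = (1+t)^{2n}$ and comparing coefficients of $t^n$ yields
\[
\binom{2n}{n} = \sum_{i=0}^n \binom{n}{i}\binom{n}{n-i} = \sum_{i=0}^n \binom{n}{i}^2,
\]
where the last equality uses the symmetry $\binom{n}{n-i}=\binom{n}{i}$. A combinatorial proof is also available (counting $n$-subsets of a $2n$-set by how many come from the first $n$ elements), but the generating function argument is cleanest.

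For part \eqref{item:3}, I will evaluate the binomial theorem at $x=1,y=1$ and at $x=1,y=-1$. The first gives $\sum_i \binom{n}{i} = 2^n$, and the second gives $\sum_i (-1)^i \binom{n}{i} = 0$ for $n\geq 1$. Adding and subtracting these two relations isolates the even-index and odd-index sums respectively, each equal to $2^{n-1}$. The case $n=0$ (where the odd sum is empty) is vacuous or handled separately as needed.

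There is no real obstacle here; all three assertions are textbook and can be stated with a one-line proof citing the binomial theorem and Vandermonde's identity. The only care needed is to note the convention used (e.g.~whether the $n=0$ boundary case of part \eqref{item:3} is included) and to keep the presentation brief so the reader moves on to Proposition~\ref{prop:binomialeq}, which is where the genuine work happens.
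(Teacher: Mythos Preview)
Your proposal is correct. For part~\eqref{item:2} you and the paper give essentially the same Vandermonde/generating-function argument. For parts~\eqref{item:1} and~\eqref{item:3} the paper opts for bijective combinatorics instead of your algebraic specializations of the binomial theorem: it counts subsets of an $n$-set for~\eqref{item:1}, and for~\eqref{item:3} it pairs each subset containing a fixed element $a$ with the subset obtained by removing $a$, observing that each pair has one even and one odd member. Both routes are standard one-liners; yours has the minor advantage of being uniform (everything is a specialization of $(x+y)^n$), while the paper's bijection for~\eqref{item:3} sidesteps the $n=0$ edge case you flagged, since the pairing argument tacitly assumes $n\ge 1$.
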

\begin{proof}
\begin{enumerate}
\item Both sides compute the number of subsets of a set with $n$ elements.
\item We have: $\sum_{i=0}^n \binom{n}{i}^2=\sum_{i=0}^n \binom{n}{i}\binom{n}{n-i}$. The equality $\sum_{i=0}^n \binom{n}{i}\binom{n}{n-i}=\binom{2n}{n}$ follows by comparing the coefficient of $x^ny^n$ in the expression $(x+y)^n(x+y)^n=(x+y)^{2n}$. 
\item Fix an element $a$ of a set $S$ with $n$ elements. Divide all subsets of $S$ into pairs: $(P,Q)$ where $a\in P$ and $Q=P\setminus\{a\}$. Every pair contains precisely one odd and one even subset, thus there are as many even as odd subsets of $S$. The first equality follows. The second equality is now a consequence of \eqref{item:1}. 
\end{enumerate}
\end{proof}
\begin{lemma}\label{lem:binomtail}
\[\sum_{i=0}^{\lfloor \frac{n}{2}\rfloor}\sum_{j=0}^{\lfloor \frac{n}{2}\rfloor} \binom{n}{2i}\binom{n}{2j}=2^{2n-2}.\]
\end{lemma}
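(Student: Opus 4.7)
The statement is an immediate consequence of part \eqref{item:3} of Lemma \ref{lem:easy}. My plan is to observe that the double sum factors as a product of two single sums, each of which is the sum of binomial coefficients $\binom{n}{k}$ over even indices $k$.

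More precisely, since the summand $\binom{n}{2i}\binom{n}{2j}$ depends on $i$ and $j$ separately in a multiplicative fashion, I would rewrite
\[
\sum_{i=0}^{\lfloor n/2\rfloor}\sum_{j=0}^{\lfloor n/2\rfloor}\binom{n}{2i}\binom{n}{2j}
=\left(\sum_{i=0}^{\lfloor n/2\rfloor}\binom{n}{2i}\right)\left(\sum_{j=0}^{\lfloor n/2\rfloor}\binom{n}{2j}\right).
\]
As $i$ ranges over $\{0,1,\dots,\lfloor n/2\rfloor\}$, the index $2i$ ranges over precisely the even elements of $\{0,1,\dots,n\}$, so each factor equals $\sum_{k=0,\,k\text{ even}}^{n}\binom{n}{k}$, which is $2^{n-1}$ by Lemma \ref{lem:easy}\eqref{item:3}. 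Multiplying the two factors yields $2^{n-1}\cdot 2^{n-1}=2^{2n-2}$, as claimed.

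There is no real obstacle here: the only thing to verify is that the range of summation on the outer indices matches the even integers in $\{0,\dots,n\}$, which is automatic regardless of the parity of $n$ (when $n$ is even, $2\lfloor n/2\rfloor = n$ is included; when $n$ is odd, $2\lfloor n/2\rfloor = n-1$ is the largest even integer $\le n$). Hence the lemma is essentially a one-line computation invoking Lemma \ref{lem:easy}\eqref{item:3} twice.
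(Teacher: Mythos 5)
Your proof is correct and is essentially identical to the paper's argument: the paper also factors the double sum as $\bigl(\sum_{i=0}^{\lfloor n/2\rfloor}\binom{n}{2i}\bigr)^2$ and then invokes Lemma \ref{lem:easy}\eqref{item:3}. Your additional remark about the parity of $n$ is a harmless elaboration of the same one-line computation.
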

\begin{proof}
We have: 
\[\sum_{i=0}^{\lfloor \frac{n}{2}\rfloor}\sum_{j=0}^{\lfloor \frac{n}{2}\rfloor} \binom{n}{2i}\binom{n}{2j}=\left(\sum_{i=0}^{\lfloor \frac{n}{2}\rfloor} \binom{n}{2i}\right)^2.\]
The statement follows from Lemma \ref{lem:easy}\eqref{item:3}. 
\end{proof}
Let us define:
\[h(i,j)=\begin{cases}
i-j-1\text{ for } i\geq j\\
j-i-1\text{ for } i<j.
\end{cases}\]
\begin{lemma}\label{lem:kwadraty}
\begin{align*}
\sum_{i=0}^{\lfloor \frac{n}{2}\rfloor}\sum_{j=0}^{\lfloor \frac{n}{2}\rfloor}\left(\binom{n}{i+j}^2+\binom{n}{h(i,j)}^2\right)=
\begin{cases}
\frac{n+2}{2}\binom{2n}{n}\text{ for }n\text{ even}\\
\frac{n}{2}\binom{2n}{n}\text{ for }n\text{ odd}.
\end{cases}
\end{align*}
\end{lemma}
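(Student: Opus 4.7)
The strategy is to reindex each double sum as a weighted single sum over $k=i+j$ and $k=|i-j|-1$ respectively, then use the symmetry $\binom{n}{k}=\binom{n}{n-k}$ together with Lemma~\ref{lem:easy}\eqref{item:2}, namely $\sum_{k=0}^{n}\binom{n}{k}^{2}=\binom{2n}{n}$. Set $N=\lfloor n/2\rfloor$ and split the left-hand side as $S_{1}+S_{2}$, where $S_{1}$ collects the $\binom{n}{i+j}^{2}$ terms and $S_{2}$ the $\binom{n}{h(i,j)}^{2}$ terms.

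First, I would count multiplicities. For $(i,j)\in\{0,\dots,N\}^{2}$, the value $k=i+j$ is attained $k+1$ times for $0\le k\le N$ and $2N-k+1$ times for $N<k\le 2N$, giving
\[
S_{1}=\sum_{k=0}^{N}(k+1)\binom{n}{k}^{2}+\sum_{k=N+1}^{2N}(2N-k+1)\binom{n}{k}^{2}.
\]
Since $h(i,j)=-1$ exactly when $i=j$ (contributing nothing, as $\binom{n}{-1}=0$), and otherwise $|i-j|=k+1$ is attained $2(N-k)$ times for $0\le k\le N-1$,
\[
S_{2}=\sum_{k=0}^{N-1}2(N-k)\binom{n}{k}^{2}.
\]

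The main step is then to fold the tail of $S_{1}$ back using $\binom{n}{k}=\binom{n}{n-k}$ via the substitution $k\mapsto n-k$, and to add $S_{2}$. Here the parity of $n$ matters, so I would treat the two cases separately. When $n=2N$ is even, the substitution turns the tail into $\sum_{k=0}^{N-1}(k+1)\binom{n}{k}^{2}$; combining with $S_{2}$ all $k$-coefficients collapse to the constant $2(N+1)$, giving $S_{1}+S_{2}=(N+1)\bigl(2\sum_{k=0}^{N-1}\binom{n}{k}^{2}+\binom{n}{N}^{2}\bigr)=(N+1)\binom{2n}{n}=\frac{n+2}{2}\binom{2n}{n}$. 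When $n=2N+1$ is odd, the substitution $k\mapsto n-k$ sends the tail to $\sum_{k=1}^{N}k\binom{n}{k}^{2}$; combining with $S_{2}$ the $k$-coefficients collapse to $2N+1=n$, and one gets $S_{1}+S_{2}=n\sum_{k=0}^{N}\binom{n}{k}^{2}=\frac{n}{2}\binom{2n}{n}$, the last equality by symmetry of $\binom{n}{k}^{2}$ around $k=n/2$ (no middle term appears when $n$ is odd).

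The only real obstacle is bookkeeping: one must carefully track the multiplicities and the middle index $k=N$ (which is doubled in the even case but not the odd case). Once the change of variables is made, the coefficients miraculously combine to $n+1$ or $n$ times $\binom{n}{k}^{2}$, and Lemma~\ref{lem:easy}\eqref{item:2} closes the computation.
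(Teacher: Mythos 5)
Your proposal is correct, and it is essentially the paper's own argument: both proofs count the multiplicity with which each $\binom{n}{k}^2$ occurs in the two sums, use the symmetry $\binom{n}{k}=\binom{n}{n-k}$ to fold the large indices back, observe that the coefficients become the constant $n+2$ (resp.\ $n$) according to parity, and conclude with $\sum_k\binom{n}{k}^2=\binom{2n}{n}$. The only difference is organizational (you fold the tail of $S_1$ explicitly and track the middle index, while the paper groups the terms $a$ and $n-a$ directly), so no substantive gap remains.
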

\begin{proof}
Let $a< \lfloor \frac{n}{2}\rfloor$. We note that the binomial $\binom{n}{a}^2$ appears in the sum above $(a+1)$ times as $\binom{n}{i+j}^2$ and $2(\lfloor \frac{n}{2}\rfloor-a)$ as $\binom{n}{h(i,j)}^2$. The binomial $\binom{n}{n-a}^2$ appears exactly $(2\lfloor \frac{n}{2}\rfloor-n+a+1) $ many times.

As both binomials are equal we count how many times one of them appears: 
\[\textstyle a+1+ 2(\lfloor \frac{n}{2}\rfloor-a)+(2\lfloor \frac{n}{2}\rfloor-n+a+1)=4\lfloor \frac{n}{2}\rfloor+2-n.\]

When $n$ is even this equals $n+2$. In this case $\binom{n}{\frac{n}{2}}$ appears $\frac{n+2}{2}$ many times. Thus the total sum equals:
\[\frac{n+2}{2}\sum_{i=0}^n \binom{n}{i}^2=\frac{n+2}{2}\binom{2n}{n},\]
where the last equality follows from Lemma \ref{lem:easy}\eqref{item:2}. 

When $n=2k+1$ then together the two binomials $\binom{n}{a}^2$ and $\binom{n}{n-a}^2$ appear
\[4k+2-2k-1=2k+1=n\]
many times and this remains true for $a=k$. Thus the total sum is equal to:
\[\frac{n}{2}\sum_{i=0}^n \binom{n}{i}^2=\frac{n}{2}\binom{2n}{n}.\]
\end{proof}
\begin{lemma}\label{lem:mieszane}
\begin{align*}
\sum_{i=0}^{\lfloor \frac{n}{2}\rfloor}\sum_{j=0}^{\lfloor \frac{n}{2}\rfloor}\binom{n}{i+j}\binom{n}{h(i,j)}=
\begin{cases}
2^{2n-2}\text{ for }n\text{ even}\\
2^{2n-2}-\frac{1}{2}\binom{2n}{n}\text{ for }n\text{ odd}.
\end{cases}
\end{align*}
\end{lemma}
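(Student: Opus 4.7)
The plan is to reduce the double sum to a one-variable sum of the form $\sum_{m \text{ odd}}\binom{2n}{m}$ via Vandermonde's identity, and then evaluate that tail using the reflection symmetry $\binom{2n}{m}=\binom{2n}{2n-m}$. Two preliminary observations will simplify matters: $h(i,j)$ is symmetric in $(i,j)$, and $h(i,i)=-1$ so the diagonal contributes zero. Hence the double sum equals $2\sum_{0\le j<i\le\lfloor n/2\rfloor}\binom{n}{i+j}\binom{n}{i-j-1}$.

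The main manoeuvre I would perform is the change of variables $a=i+j$, $b=i-j-1$. A direct check shows this gives a bijection between $\{(i,j):0\le j<i\le \lfloor n/2\rfloor\}$ and the set of pairs $(a,b)$ of non-negative integers satisfying $a>b$, $a+b$ odd, and $a+b\le 2\lfloor n/2\rfloor-1$; the inverse is $i=(a+b+1)/2$, $j=(a-b-1)/2$, and the parity of $a+b$ guarantees these are integers. Since $a+b$ odd automatically forces $a\neq b$, the constraint $a>b$ is exactly half of $a\neq b$; symmetrizing in $(a,b)$ absorbs the outer factor of $2$, and then Vandermonde's identity $\sum_{a+b=m}\binom{n}{a}\binom{n}{b}=\binom{2n}{m}$ collapses everything to
\[
\sum_{\substack{m\text{ odd}\\1\le m\le 2\lfloor n/2\rfloor-1}}\binom{2n}{m}.
\]

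I would finish by splitting on the parity of $n$ and using $\sum_{m\text{ odd}}\binom{2n}{m}=2^{2n-1}$ together with $\binom{2n}{m}=\binom{2n}{2n-m}$. When $n$ is even the upper bound is $n-1$, and the involution $m\mapsto 2n-m$ biject the index set $\{m\text{ odd}:1\le m\le n-1\}$ with $\{m\text{ odd}:n+1\le m\le 2n-1\}$; these two halves together exhaust all odd $m\in[1,2n-1]$ (the central $m=n$ is even), so each half equals $2^{2n-2}$. When $n$ is odd the upper bound is $n-2$, and the same involution leaves the central odd index $m=n$ unpaired, giving $2\cdot(\text{sum})+\binom{2n}{n}=2^{2n-1}$, hence the sum equals $2^{2n-2}-\tfrac{1}{2}\binom{2n}{n}$.

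I do not expect any serious obstacle here. The only step requiring mild care is verifying the bijection and matching the upper bound $2\lfloor n/2\rfloor-1$ with the correct parity in each case so that the tail sum interacts cleanly with the central binomial $\binom{2n}{n}$; everything else is routine manipulation of binomial coefficients.
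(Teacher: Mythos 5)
Your proof is correct, and it takes a genuinely different route from the paper. You observe that the diagonal vanishes (since $h(i,i)=-1$) and that the summand is symmetric, reindex the off-diagonal terms by $(a,b)=(i+j,\,i-j-1)$, use the parity of $a+b$ to desymmetrize the constraint $a>b$ for free, and then collapse each antidiagonal by Vandermonde's convolution $\sum_{a+b=m}\binom{n}{a}\binom{n}{b}=\binom{2n}{m}$, reducing the whole double sum to the tail $\sum_{m\ \mathrm{odd},\,1\le m\le 2\lfloor n/2\rfloor-1}\binom{2n}{m}$, which the reflection $m\mapsto 2n-m$ evaluates in both parity cases (the central term $\binom{2n}{n}$ appearing exactly when $n$ is odd, which is where the $-\tfrac12\binom{2n}{n}$ comes from). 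The paper instead argues combinatorially: it interprets the pairs $(i+j,\,h(i,j))$ as ranging over partitions of $\{-1,0,\dots,n\}$ into two cyclically consecutive arcs, pairs off entries clockwise/counterclockwise so that each $\binom{n}{i}$ is multiplied by the sum of binomial coefficients of a fixed parity, and invokes $\sum_{i\ \mathrm{even}}\binom{n}{i}=2^{n-1}$; the odd-$n$ case requires a rather delicate correction (doubling twice, adding the missing arc pair and the loop terms $\sum_i\binom{n}{i}^2$) to solve for the sum. Your argument is more mechanical and uniform — the parity of $n$ enters only at the final reflection step — whereas the paper's stays within the circular-partition picture it also uses for the companion Lemma~\ref{lem:kwadraty}; both yield the same identity, and your bijection $(i,j)\leftrightarrow(a,b)$ and the bound $a+b\le 2\lfloor n/2\rfloor-1$ check out exactly as claimed.
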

\begin{proof}
First assume $n$ is even. For fixed $i$, the $i+j$ varies through the set $S_i=\{i,\dots,i+\frac{n}{2}\}$. At the same time (using the fact that $\binom{n}{a}=\binom{n}{n-a}$) $h(i,j)$ varies through the complement of $S_i$ in $\{-1,0,\dots,n\}$. Thus, instead of summing over $i,j$ we may sum over partitions $(S_i, S_i')$ of the set $\{-1.\dots,n\}$ into two, cyclically consecutive subsets with $\frac{n}{2}+1$ elements each. We put the elements of $\{-1,\dots,n\}$ an a circle turning $(S_i, S_i')$ into subdivision of the circle into two halves. For each such subdivision we take the sum of products of: $\binom{n}{a}$ where $a$ is the $l$-th coefficient counted clockwise in one part and $\binom{n}{b}$ where $b$ is the $l$-th coefficient counted counterclockwise in the second part. 

This means that each $\binom{n}{i}$ is multiplied with $\binom{n}{i-1}$ (when $i$ is first in its part), with $\binom{n}{i-3}$ (when $i$ is second in its part) etc. Summing up, it is multiplied by the sum of either odd or even binomial coefficients (depending if $i$ is even or odd). Thus we obtain, using Lemma \ref{lem:easy}\eqref{item:3}, that twice the given sum equals:
\[\sum_{i=0}^n \binom{n}{i}2^{n-1}=2^{2n-1},\]
which finishes the proof in the case when $n$ is even.

Suppose now $n=2k+1$. In this case the sets $(S_i, S_i')$ do not form a partition of $\{-1,\dots,n\}$, as $i+k+1$ does not belong to any of the sets. Also we do not get all pairs of (consecutive, $k+1$-elment) sets, as the missing number is always from $k+1$ to $n$. Let us call the sum we are computing $x$. First we double $x$, by changing each $\binom{n}{a}\binom{n}{b}$ to $\binom{n}{n-a}\binom{n}{n-b}$. This reflects the sets $(S_i,S_i')$, so that now indeed we obtain each pair of consecutive, $k+1$-element sets, apart from $(\{k+1,\dots,n\},\{0,\dots,k\})$. The sum is now $2x$. Adding the sum corresponding to the missing pair, we obtain:
\[2x+\sum_{i=0}^k \binom{n}{k+1+i}\binom{n}{k-i}=2x+\frac{1}{2}\binom{2n}{n}.\]
As before we put the elements of $\{-1,\dots,n\}$ on a circle identifying $i$ with $\binom{n}{i}$. We draw an edge between $i$ and $j$ each time we have $\binom{n}{i} \binom{n}{j}$ in the sum we consider. We note that we obtain exactly all edges, but no loops. We double our sum obtaining $4x+\binom{2n}{n}$, and now each edge is counted twice. We also add loops, i.e.~the sum $\sum_{i=-1}^n \binom{n}{i}^2$, obtaining $4x+2\binom{2n}{n}$. As we obtained all edges twice and loops once the sum we currently have is equal to:
\[\left(\sum_{i=0}^n\binom{n}{i}\right)^2=2^{2n}.\]
Solving for $x$ we obtained the claimed equality. 
\end{proof}

We now prove the main proposition about the sums of binomial coefficients.
\begin{proof}[Proof of Proposition \ref{prop:binomialeq}]
We note the LHS is equal to:
\begin{multline*}
  \left(\sum_{i=0}^{\lfloor \frac{n}{2}\rfloor}\sum_{j=0}^{\lfloor \frac{n}{2}\rfloor}\binom{n}{i+j}^2+\binom{n}{h(i,j)}^2\right)-2\left(\sum_{i=0}^{\lfloor \frac{n}{2}\rfloor}\sum_{j=0}^{\lfloor \frac{n}{2}\rfloor} \binom{n}{i+j}\binom{n}{h(i,j)}\right) \\
  {}-\left(\sum_{i=0}^{\lfloor \frac{n}{2}\rfloor}\sum_{j=0}^{\lfloor \frac{n}{2}\rfloor}\binom{n}{2i}\binom{n}{2j}\right).
\end{multline*}
Applying Lemma \ref{lem:kwadraty} to the first term, Lemma \ref{lem:mieszane} to the second and Lemma \ref{lem:binomtail} to the last one we obtain the claimed statement.  
\end{proof}

\subsection{Proof of the main theorem}\label{sec:pf_thm}

We finish the article by proving our main theorem.
\begin{proof}[Proof of Theorem \ref{thm:main}]
As mentioned in \S\ref{sec:cyclic}, the statement holds trivially for $n=3$. For any $n\ge4$, we will apply Lemma \ref{lem:general}, taking $g_i$'s to be the $3\times 3$ minors generating the Sullivant-Talaska ideal $I_n$ and the term order specified in Definition \ref{def:order}. We need to verify the three conditions. The first one follows from Lemma \ref{lem:leading}. The second one and the dimension part of the third one is precisely Corollary \ref{cor:equi}. For the degree part by the same corollary we know that the degree of $J$ equals the number of Msafts. Combining Proposition \ref{prop:how_many_Msaft} with Proposition \ref{prop:binomialeq} we see that it equals:
\[\frac{n+2}{4}\binom{2n}{n}-3\cdot 2^{2n-3}.\]
By the main theorem of \cite{MRV} this is also the degree of the cyclic Gaussian graphical model, which finishes the proof. 
\end{proof}

\begin{rem}\label{GBness}
As we proved the initial ideal of $I_n$ coincides with the initial ideal of $I(C_n)$, the prime ideal of the Gaussian graphical model of the $n$-cycle. Thus, the special $3\times 3$ minors form a Gr\"obner basis of $I(C_n)$ for a term order defined in Definition \ref{def:order}. 
\end{rem}
As a final remark, we note that in our proof we use a complicated result computing the degree of $L^{-1}_{C_n}$, which was only achieved very recently. However, our proof actually shows that if one could prove our main Theorem \ref{thm:main} without referring to this result, it would also provide an alternative, possibly simpler, proof for the degree of $L^{-1}_{C_n}$.


\begin{thebibliography}{10}
\bibitem{Anderson} Theodore Anderson.
\newblock Estimation of covariance matrices which are linear combinations or whose inverses
are linear combinations of given matrices. 

\newblock In R. C. Bose, I. M. Chakravati, P. C. Mahalanobis,
C. R. Rao, K. J. C. Smith (Eds.), {\em Essays in probability and statistics} (pp. 1-24). Chapel Hill: University
of North Carolina Press.


\bibitem{AmendolaEtAl}
Carlos Am{\'e}ndola, Lukas Gustafsson, Kathl{\'e}n Kohn, Orlando Marigliano,
  and Anna Seigal.
\newblock The maximum likelihood degree of linear spaces of symmetric matrices.
\newblock {\em arXiv:2012.00198}, 2020, to appear in Le Matematiche.

\bibitem{BCRV} Winfried Bruns, Aldo Conca, Claudiu Raicu, Matteo Varbaro.
\newblock Determinants, Gr{\"o}bner Bases and Cohomology.
\newblock {\em Springer Monographs in Mathematics (SMM)}, 2022.

\bibitem{Conca} Aldo Conca. 
\newblock Gr\"obner Bases of Ideals of Minors of a Symmetric Matrix. 
\newblock {\em Journal of Algebra} 166.2 (1994): 406-421.

\bibitem{CMSS} Austin Conner, Mateusz Michalek, Michael Schindler, Balazs Szendroi.
\newblock Polynomial systems admitting a simultaneous solution.
\newblock {\em arXiv:2306.02085}, 2023.


\bibitem{MRV}
Rodica Dinu, Mateusz Micha{\l}ek, and Martin Vodi\v{c}ka.
\newblock Geometry of the gaussian graphical model of the cycle.
\newblock {\em preprint}.

\bibitem{DMS2} Rodica Dinu, Mateusz Micha{\l}ek, and  Tim Seynnaeve. 
\newblock Applications of intersection theory: from maximum likelihood to chromatic polynomials. 
\newblock {\em arXiv preprint arXiv:2111.02057 (2021).}

\bibitem{DSS} Mathias Drton, Bernd Sturmfels, and Seth Sullivant. {\em Lectures on algebraic statistics}. Vol. 39. Springer Science \& Business Media, 2008.

\bibitem{GV}
Ira Gessel, G{\'e}rard Viennot.
\newblock Binomial determinants, paths, and hook length formulae. 
\newblock {\em Adv. Math.}, 58 300--321, 1985.

\bibitem{Huh} June Huh. 
\newblock Milnor numbers of projective hypersurfaces and the chromatic polynomial of graphs. 
\newblock{\em Journal of the American Mathematical Society} 25.3 (2012): 907-927.

\bibitem{Lind}
Bernt Lindstr{\"o}m.
\newblock On the vector representations of induced matroids.
\newblock{Bull.Lond.Math. Soc.}, 5 85--90, 1973.

\bibitem{MMMSV} Laurent Manivel, Mateusz Micha{\l}ek, Leonid Monin, Tim Seynnaeve, and  Martin Vodi\v{c}ka. 
\newblock Complete quadrics: Schubert calculus for gaussian models and semidefinite programming. \newblock{\em Journal of the European Mathematical Society 2023}

\bibitem{MMW} Mateusz Micha{\l}ek, Leonid Monin, and Jaros{\l}aw Wisniewski. 
\newblock Maximum Likelihood Degree, Complete Quadrics, and $\C^*$-Action. 
\newblock {\em SIAM journal on applied algebra and geometry}, 5(1), 60-85.

\bibitem{MS} Pratik Misra and Seth Sullivant. \newblock Gaussian graphical models with toric vanishing ideals. 
\newblock{\em Annals of the Institute of Statistical Mathematics} 73 (2021): 757-785.

\bibitem{StUh}
Bernd Sturmfels and Caroline Uhler.
\newblock Multivariate {G}aussian, semidefinite matrix completion, and convex algebraic geometry.
\newblock {\em Ann. Inst. Statist. Math.}, 62(4)
:603--638, 2010.

\bibitem{Seth} Seth Sullivant. 
\newblock {\em Algebraic statistics}. Vol. 194. American Mathematical Soc., 2018.

\bibitem{DST} Jan Draisma, Seth Sullivant, and Kelli Talaska. 
\newblock Trek separation for Gaussian graphical models. 
\newblock {\em The Annals of Statistics} 38.3 (2010): 1665-1685.
\end{thebibliography}
\end{document}